\numberwithin{equation}{section}
\renewcommand*{\backrefalt}[4]{%
	\ifcase #1 \footnotesize{(Not cited.)}%
	\or        \footnotesize{(Cited on page~#2.)}%
	\else      \footnotesize{(Cited on pages~#2.)}%
	\fi}
\providecommand{\keywords}[1]
{
	\small	
	\textbf{\textit{Keywords.}} #1
}
\newcounter{algsubstate}
\def\ie{{\em i.e.,~}}
\def\st{{\em s.t.~}}
\def\eg{{\em e.g.,~}}
\newcommand{\wrt}{w.r.t.}
\DeclareMathOperator*{\argmin}{argmin}
\newcommand{\zero}{\ensuremath{\mathbf{0}}}
\DeclareMathOperator*{\vect}{vec} 
\DeclareMathOperator*{\kl}{KL}
\DeclareMathOperator*{\tkl}{KL^{\otimes n}}
\DeclareMathOperator*{\jtkl}{JKL_\rho^{\otimes n}}
\DeclareMathOperator*{\hel}{d^2}
\DeclareMathOperator*{\thel}{d^{2 \otimes n}}
\DeclareMathOperator*{\thell}{d^{\otimes n}}
\DeclareMathOperator*{\entropy}{\mathcal{H}_{d_{\norm{\sup}_\infty}}}
\DeclareMathOperator*{\card}{card}
\DeclareMathOperator*{\disc}{disc} 
\newcommand{\sbm}{\cS^{\cB}_{(K,J)}} 
\let\inf\relax 
\DeclareMathOperator*\inf{\vphantom{p}inf}
\DeclareMathOperator*{\adj}{Adj}
\DeclarePairedDelimiter\norm{\lVert}{\rVert}
\newcommand{\gkdb}{\cG_{\left(K,d,\bfB\right)}} 
\newcommand{\gdbk}{\cG_{\left(d,\bfB_k\right)}} 
\newcommand{\UpsilondK}{\Upsilonb_{\left(K,d\right)}} 
\newcommand{\Upsilondk}{\Upsilonb_{\left(k,d\right)}} 
\newcommand{\nn}{\nonumber} 
\newcommand{\R}{\mathbb{R}} 
\newcommand{\Indi}{\mathbb{I}} 
\newcommand{\Ns}{\mathbb{N}^\star} 
\newcommand{\pen}{\text{pen}} 
\newcommand{\Z}{\mathbb{Z}} 
\newcommand{\fC}{\mathfrak{C}}
\newcommand{\E}[2]{\mathbb{E}_{#1} \left[#2\right]}
\newcommand{\cA}{\mathcal{A}}
\newcommand{\cb}{\mathcal{b}}
\newcommand{\cB}{\mathcal{B}}
\newcommand{\cD}{\mathcal{D}}
\newcommand{\cG}{\mathcal{G}}
\newcommand{\cH}{\mathcal{H}}
\newcommand{\cK}{\mathcal{K}}
\newcommand{\cM}{\mathcal{M}}
\newcommand{\cN}{\mathcal{N}}
\newcommand{\cP}{\mathcal{P}}
\newcommand{\cR}{\mathcal{R}}
\newcommand{\cS}{\mathcal{S}}
\newcommand{\cU}{\mathcal{U}}
\newcommand{\cW}{\mathcal{W}}
\newcommand{\cX}{\mathcal{X}}
\newcommand{\cY}{\mathcal{Y}}
\newcommand{\bfA}{\mathbf{A}}
\newcommand{\bfb}{\mathbf{b}}
\newcommand{\bfB}{\mathbf{B}}
\newcommand{\bfc}{\mathbf{c}}
\newcommand{\bfC}{\mathbf{C}}
\newcommand{\bfE}{\mathbf{E}}
\newcommand{\bfg}{\mathbf{g}}
\newcommand{\bfm}{\mathbf{m}}
\newcommand{\bfP}{\mathbf{P}}
\newcommand{\bfr}{\mathbf{r}}
\newcommand{\bfs}{\mathbf{s}}
\newcommand{\bft}{\mathbf{t}}
\newcommand{\bfV}{\mathbf{V}}
\newcommand{\bfw}{\mathbf{w}}
\newcommand{\bfx}{\mathbf{x}}
\def\alphab      {{\sbmm{\alpha}}\XS}
\def\Gammab    {{\sbmm{\Gamma}}\XS}
\def\thetab      {{\sbmm{\theta}}\XS}      \def\Thetab    {{\sbmm{\Theta}}\XS}
\def\mub         {{\sbmm{\mu}}\XS}
\def\pib         {{\sbmm{\pi}}\XS}                 \def\Pib        {{\sbmm{\Pi}}\XS}
      \def\Sigmab    {{\sbmm{\Sigma}}\XS}
\def\psib        {{\sbmm{\psi}}\XS}        \def\Psib       {{\sbmm{\Psi}}\XS}
\def\omegab      {{\sbmm{\omega}}\XS}      \def\Omegab    {{\sbmm{\Omega}}\XS}
\def\upsilonb    {{\sbmm{\upsilon}}\XS}   \def\Upsilonb  {{\sbmm{\Upsilon}}\XS}
\def\upsilonb       {{\sbmm{\upsilon}}\XS}
\newcommand{\Xv}{\ensuremath{\mathbf{X}}} 
\newcommand{\xv}{\ensuremath{\mathbf{x}}} 
\newcommand{\Yv}{\ensuremath{\mathbf{Y}}}
\newcommand{\yv}{\ensuremath{\mathbf{y}}}
\def\XS{\xspace}
\DeclareMathAlphabet{\mathb}{OML}{cmm}{b}{it}
\def\sbm#1{\ensuremath{\mathb{#1}}}
\def\sbmm#1{\ensuremath{\boldsymbol{#1}}}  
\def\Ab{{\sbm{A}}\XS}  
  \def\bb{{\sbm{b}}\XS}
  \def\cb{{\sbm{c}}\XS}
\def\Vb{{\sbm{V}}\XS}
\theoremstyle{plain}
\newtheorem{theorem}{Theorem}[section]
\newtheorem{lem}[theorem]{Lemma}
\newtheorem{prop}[theorem]{Proposition}
\newtheorem{assumption}{Assumption}[section]
\theoremstyle{definition}
\newtheorem{remark}[theorem]{Remark}
\crefname{lem}{Lemma}{Lemmas}
\crefname{theorem}{Theorem}{Theorems}
\crefname{prop}{Proposition}{Propositions}
\theoremstyle{plain}
\begin{document}


\title{Non-asymptotic model selection in \\block-diagonal mixture of polynomial experts models}

\author[1]{TrungTin Nguyen\thanks{Corresponding author, email: trung-tin.nguyen@unicaen.fr}}
\author[1]{Faicel Chamroukhi}
\author[2]{Hien Duy Nguyen}
\author[3]{Florence Forbes}

\affil[1]{Normandie Univ, UNICAEN, CNRS, LMNO, 14000 Caen, France.}
\affil[2]{Department of Mathematics and Statistics, La Trobe University, Bundoora Melbourne 3066, Victoria Australia.}
\affil[3]{Univ. Grenoble Alpes, Inria, CNRS, Grenoble INP, LJK, Inria Grenoble Rhone-Alpes, 655 av. de l’Europe, 38335 Montbonnot, France.}


\maketitle

\begin{abstract}
	Model selection, via penalized likelihood type criteria, is a standard task in many statistical inference and machine learning problems. Progress has led to deriving criteria with asymptotic consistency results and an increasing emphasis on introducing non-asymptotic criteria. 
	We focus on the problem of modeling non-linear relationships in regression data with potential hidden graph-structured interactions between the high-dimensional predictors, within the mixture of experts modeling framework. 
	In order to deal with such a complex situation, we investigate a block-diagonal localized mixture of polynomial experts (BLoMPE) regression model, which is constructed upon an inverse regression and block-diagonal structures of the Gaussian expert covariance matrices. 
	We introduce a penalized maximum likelihood selection criterion to estimate the unknown conditional density of the regression model. 
	This model selection criterion allows us to handle the challenging problem of inferring the number of mixture components, the degree of polynomial mean functions, and the hidden block-diagonal structures of the covariance matrices, which reduces the number of parameters to be estimated and leads to a trade-off between complexity and sparsity in the model.
	In particular, we provide a strong theoretical guarantee: a finite-sample oracle inequality satisfied by the penalized maximum likelihood estimator with a Jensen--Kullback--Leibler type loss, to support the introduced non-asymptotic model selection criterion. 
	The penalty shape of this criterion depends on the complexity of the considered random subcollection of BLoMPE models, including the relevant graph structures, the degree of polynomial mean functions, and the number of mixture components.
\end{abstract}

\keywords{Model selection, mixture of experts, mixture of regressions, block-diagonal covariance matrix, graphical Lasso,  penalized maximum likelihood, network inference, dimensionality reduction.}

\pagebreak
\tableofcontents

\section{Introduction}
Mixture of experts (MoE) models, initially proposed in \cite{jacobs1991adaptive,jordan1994hierarchical}, have been thoroughly studied in statistics and machine learning due to their flexibility and the abundance of statistical estimation and model selection tools available to fit them. Their universal approximation capability has been extensively established for not only finite mixture models \citep{genovese2000rates, nguyen2013convergence, ho2016convergence, ho2016strong, nguyen2020approximation,nguyen2020approximationLesbegue} but also conditional densities of MoE  \citep{jiang1999hierarchical, norets2010approximation, mendes2012convergence, nguyen2016universal, ho2019convergence,nguyen2019approximation,nguyen2020approximationMoE}. Recent reviews on practical and theoretical issues of MoE models can be found in \cite{yuksel2012twenty}, and \cite{nguyen2018practical}.

In this paper, we aim to provide a non-asymptotic oracle type inequality, which is well-known as a strong theoretical guarantee for the slope heuristic procedure for model selection \citep{birge2007minimal}, in block-diagonal localized mixture of polynomial experts (BLoMPE) models. 
The BLoMPE model is a Gaussian-gated localized mixture of experts (GLoME) \cite{xu1995alternative} enjoying a parsimonious covariance structure,
via block-diagonal structures for covariance matrices in the Gaussian experts. 
It is worth mentioning that GLoME models have been utilized extensively under several different contexts in statistics and machine learning: localized MoE \citep{ramamurti1996structural,ramamurti1998use,moerland1999classification,bouchard2003localised}, normalized Gaussian network \citep{sato2000line}, MoE modeling of priors in Bayesian nonparametric regression \citep{norets2014posterior,norets2017adaptive},
cluster-weighted modeling \citep{ingrassia2012local}, supervised Gaussian locally-linear mapping (GLLiM) in inverse regression \citep{deleforge2015high}, deep mixture of linear inverse regressions \citep{Lathuiliere2017},  multiple-output Gaussian gated
mixture of linear experts \citep{nguyen2019approximation} and regularized Gaussian gated
mixture of experts \citep{chamroukhi2019regularized}, to name just a few.

It is also interesting to point out that block-diagonal covariance for Gaussian locally-linear mapping (BLLiM) model in \cite{devijver2017nonlinear} is an affine instance of a BLoMPE model, where linear combination of bounded functions (\eg polynomials) are considered instead of affine mean functions for the Gaussian experts. 
The BLLiM framework aims to model a sample of high-dimensional regression data issued from a heterogeneous population with hidden graph-structured interaction between covariates. In particular, the BLLiM model is considered as a good candidate for performing a model-based clustering and predicting the response in situations affected by the curse of dimensionality phenomenon, where the number of parameters could be larger than the sample size. Indeed, to deal with high-dimensional regression problems, the BLLiM model, initially proposed by \cite{li1991sliced}, is based on an inverse regression strategy, which inverts the role of the high-dimensional predictor and the multivariate response. Therefore, the number of parameters to estimate is drastically reduced. 
More precisely, BLLiM utilizes the Gaussian locally-linear mapping (GLLiM), described in \cite{deleforge2015hyper,deleforge2015high}, and \cite{perthame2018inverse}, in conjunction with a block-diagonal structure hypothesis on the residual covariance matrices  to make a trade-off between complexity and sparsity.

This prediction model is fully parametric and highly interpretable. For instance, it might be useful for
the analysis of transcriptomic data in molecular biology to classify observations or predict phenotypic states, as for example disease versus non disease or tumor versus normal \citep{golub1999molecular,nguyen2002tumor,le2008sparse}. Indeed, if predictor variables are gene expression data measured by microarrays or by the RNA-seq technologies and the response is a phenotypic variables, situations affected by the BLLiM not only provides clusters of individuals based on the relation between gene expression data and the
phenotype but also implies a gene regulatory network specific for each cluster of individuals (see \cite{devijver2017nonlinear} for more details). 

It is worth noting that two hyperparameters must be estimated to construct a BLLiM model: the number of mixtures components (or clusters) and the block structure of large covariance matrices specific
of each cluster (the size and the number of blocks). Data driven choices of hyperparameters of learning algorithms belong to the model selection class of problems, which has attracted much attention in statistics and machine learning over the last 50 years \citep{akaike1974new,mallows2000some,anderson2004model,massart2007concentration}. This is a particular instance of the estimator (or model) selection problem: given a family of estimators, how do we choose, using data, one among them whose risk is as small as possible? Note that penalization is one of the main strategies proposed for model selection. It suggests to choose the estimator minimizing the sum of its empirical risk and some penalty terms corresponding to how well the model fits the data, while avoiding overfitting.

In general, model selection can be performed using the Akaike information criterion (AIC) or the Bayesian information criterion (BIC) \citep{akaike1974new,schwarz1978estimating}. Nevertheless, these approaches are asymptotic, which implies that there are no finite sample guarantees for choosing between different levels of complexity. Their use in small sample settings is thus ad hoc. To overcome such difficulties, \cite{birge2007minimal} proposed a novel approach, called slope heuristics, supported by a non-asymptotic oracle inequality. This method leads to an optimal data-driven
choice of multiplicative constants for penalties. Practical issues and recent surveys for the slope heuristic can be found in \cite{baudry2012slope}, and \cite{arlot2019minimal}.

It should be stressed that a general model selection result, originally established by \citet[Theorem 7.11]{massart2007concentration}, guarantees a penalized criterion leads to a good model selection and the penalty being defined by the model complexity. In particular, it provides support for the slope heuristic approach in a finite sample setting. Then, in the spirit of the concentration inequality-based methods developed in \cite{massart2007concentration}, \cite{Massart:2011aa}, and \cite{cohen2011conditional}, a huge number of finite-sample oracle results have been proposed in several statistical frameworks including high dimensional Gaussian graphical models \citep{devijver2018block}, Gaussian mixture model selection \citep{maugis2011non,maugis2011data}, finite mixture regression models \citep{Meynet:2013aa, Devijver:2015aa, devijver2015finite,devijver2017model,devijver2017joint}, soft-max-gated mixture of experts (SGaME) \citep{montuelle2014mixture,nguyen2020l1oracle}, and Gaussian-gated localized MoE (GLoME) models \citep{nguyen2021nonGLoME}. However, to the best of our knowledge, we are the first to
provide a finite-sample oracle inequality: \cref{weakOracleInequality}, for the BLoMPE regression model. In particular, our proof strategy makes use of recent novel approaches comprising a model selection theorem for maximum likelihood estimator (MLE) among a random subcollection \citep{devijver2015finite}, a non-asymptotic model selection result for detecting a good block-diagonal structure in high-dimensional graphical models \citep{devijver2018block} and a reparameterization trick to bound the metric entropy of the Gaussian gating parameter space in GLoME models \citep{nguyen2021nonGLoME}.

The main contribution of our paper is an important theoretical result: a finite-sample oracle inequality that provides a non-asymptotic bound on the risk, and a lower bound on the penalty function that ensures such non asymptotic theoretical control on the estimator under the Kullback--Leibler loss. It also provides a strong theoretical justification for the penalty shape when using the slope heuristic for the BLoMPE as well as BLLiM models.

The rest of this paper is organized as follows. In \cref{notationAndFramework}, we discuss the model construction and framework for BLoMPE and BLLiM models. Then, we present the main results of the paper, an oracle inequality satisfied by the penalized maximum likelihood of BLoMPE, in \cref{mainResult}.
\cref{sec.proofOracleIneq} is devoted to the proof of these main results based on a general model selection theorem. Proofs of lemmas are provided in \cref{proofLemma}.


\section{Notation and framework}\label{notationAndFramework}
We consider a regression framework and aim at capturing the potential  nonlinear relationship between the multivariate response $\Yv =\left(\Yv_j\right)_{j \in [L]},  [L]= \left\{1,\ldots,L\right\}$, and the set of covariates $\Xv=\left(\Xv_j\right)_{j \in [D]}$ with a potential hidden graph-structured interaction between covariates. 
Let $\left(\left(\Xv_i,\Yv_i\right)\right)_{ i \in [n]}\in \left(\cX\times\cY\right)^n \subset \left(\R^D\times\R^L\right)^n$ be a random sample, and let $\xv$ and $\yv$ denote the observed values of the random variables $\Xv$ and $\Yv$, respectively.

\subsection{BLoMPE models} \label{sec_BLoMPE_Model}

In order to accommodate a potential hidden graph-structured interaction and make a trade-off between complexity and sparsity, we consider an extension of the GLoME model from \cite{nguyen2021nonGLoME}, which generalized the MoE models \citep{jacobs1991adaptive,xu1995alternative}. More specifically, we consider the following BLoMPE model, defined by \eqref{eq_define_BLoMPE}, which is motivated by an inverse regression framework, where the role of response variables and high-dimensional predictors are exchanged such that the response $\Yv$ becomes the covariate and the predictor $\Xv$ plays the role of a multivariate response.

Then the BLoMPE model is defined by the following conditional density:
\begin{align}
	s_{\psib_{K,d}}(\xv| \yv) &= \sum_{k=1}^K \bfg_{k}\left(\yv;\omegab\right) \Phi_D\left(\xv;\upsilonb_{k,d}(\yv),\Sigmab_k\left(\bfB_k\right)\right),\label{eq_define_BLoMPE}
\end{align}
with	%
\begin{align}
	\bfg_{k}\left(\yv;\omegab\right)&= \frac{ \pib_k\Phi_L\left(\yv;\bfc_k,\Gammab_k\right)  }{\sum_{j=1}^K \pib_j \Phi_L\left(\yv;\bfc_j,\Gammab_j\right)}. \label{eq_BLoMPE_GaussianGating}
\end{align}
Here, $\bfg_k(\cdot; \omegab)$ and $\Phi_D\left(\cdot;\upsilonb_{k,d}(\cdot),\Sigmab_k\left(\bfB_k\right)\right), k\in[K]$, $K \in \Ns$, $d \in \Ns$, are called Gaussian gating functions and Gaussian experts, respectively. Furthermore, we decompose the parameters of the model as follows: $\psib_{K,d} = \left(\omegab,\upsilonb_d,\Sigmab\left(\bfB\right)\right) \in \Omegab_K\times \UpsilondK \times \bfV_K\left(\bfB\right) =:\Psib_{K,d} $, $\omegab = \left(\pib,\cb,\Gammab\right) \in \left(\Pib_{K-1} \times \bfC_K \times \Vb'_K\right) =: \Omegab_K$, $\pib = \left(\pib_k\right)_{k\in[K]}$, $\cb = \left(\bfc_k\right)_{k\in[K]}$, $\Gammab = \left(\Gammab_k\right)_{k\in[K]}$, $\upsilonb_d= \left(\upsilonb_{k,d}\right)_{k\in[K]} \in\UpsilondK $, and $\Sigmab\left(\bfB\right)= \left(\Sigmab_k\left(\bfB_k\right)\right)_{k\in[K]} \in \bfV_K\left(\bfB\right)$. Note that $\Pib_{K-1} =\left\{ \left(\pib_k\right)_{k\in[K]} \in \left(\R^+\right)^K, \sum_{k=1}^K \pib_k = 1\right\}$ is a $K-1$ dimensional probability simplex, $\bfC_K$ is a set of $K$-tuples of mean vectors of size $L \times 1$, $\Vb'_K$ is a sets of $K$-tuples of elements in $\cS_L^{++}$, where $\cS_L^{++}$ denotes the collection of symmetric positive definite matrices on $\R^L$, $\UpsilondK$ is a set of $K$-tuples of mean functions from $\R^L$ to $\R^D$ depending on a degree $d$ (\eg~a degree of polynomials), and $\bfV_K\left(\bfB\right)$ is a set containing $K$-tuples from $\cS_D^{++}$ with the following block-diagonal structures defined in \eqref{eq_block_diagonal_structures} \citep{devijver2017nonlinear,devijver2018block}.

More precisely, for $k \in [K]$, we decompose $\Sigmab_{k}\left(\bfB_k\right)$ into $G_k$ blocks, $G_k \in \Ns$, and we denote by $d^{[g]}_k$ the set of variables into the $g$th group, for $g\in \left[G_k\right]$, and by $\card\left(d^{[g]}_k\right)$ the number of variables in the corresponding set. Then, we denote by $\bfB_k = \left(d^{[g]}_k\right)_{g\in\left[G_k\right]}$ a block structure for the cluster $k$, and $\bfB = \left(\bfB_k\right)_{k\in [K]}$ the covariate indexes into each group for each cluster. Hence, up to a permutation, we can construct the following block-diagonal covariance matrices: $\bfV_K\left(\bfB\right) = \left(\bfV_k\left(\bfB_k\right)\right)_{k\in[K]}$, for every $k\in[K]$,
\begin{align} \label{eq_block_diagonal_structures}
	\bfV_k\left(\bfB_k\right) = \left\{ \Sigmab_{k}\left(\bfB_k\right) \in \mathbb{S}_D^{++}\left| 
	\begin{array}{l} 
		\Sigmab_{k}\left(\bfB_k\right) = \bfP_k
		\begin{pmatrix}
			\Sigmab_{k}^{[1]}&\zero&\ldots &\zero\\
			\zero&\Sigmab_{k}^{[2]}&\ldots&\zero\\
			\zero&\zero&\ddots&\zero\\
			\zero&\zero&\ldots&\Sigmab_{k}^{[G_k]}
		\end{pmatrix}
		\bfP^{-1}_k,\\
		\Sigmab_{k}^{[g]}\in \cS^{++}_{
			\card\left(d^{[g]}_k\right)}, \forall g \in [G_k]
	\end{array}
	\right.\right\},
\end{align}
where $\bfP_k$ corresponds to the permutation leading to a block-diagonal matrix in cluster $k$. It is worth mentioning that outside the blocks, all coefficients of the matrix are zeros and we also authorize reordering of the blocks: \eg$\left\{\left(1,3\right);\left(2,4\right)\right\}$ is identical to $\left\{\left(2,4\right);\left(1,3\right)\right\}$, and the permutation inside blocks: \eg the partition of $4$ variables into blocks $\left\{\left(1,3\right);\left(2,4\right)\right\}$ is the same as the partition $\left\{\left(3,1\right);\left(4,2\right)\right\}$. 

\begin{remark}
	The block-diagonal structures for covariance matrices $\left(\Sigmab_{k}\left(\bfB_k\right)\right)_{k\in[K]}$, defined in \eqref{eq_block_diagonal_structures}, are not only used for a trade-off between complexity and sparsity but also motivated by some real applications, where we want to perform prediction on data sets with heterogeneous observations and hidden graph-structured interactions between covariates. For instance, for gene expression data set in which conditionally on the phenotypic response, genes interact with few other genes only, \ie there are small modules of correlated genes (see	 \citealp{devijver2017nonlinear,devijver2018block} for more details).
\end{remark}

In order to establish our oracle inequality, \cref{weakOracleInequality}, we need to assume that $\cY$ is a bounded set in $\R^L$ and make explicit some classical boundedness conditions on the parameter space. 

\subsubsection{Gaussian gating functions}
For a matrix $\bfA$, let $m(\bfA)$ and $M(\bfA)$ be, respectively, the moduli of the smallest and largest eigenvalues of $\bfA$. We shall restrict our study to bounded Gaussian gating parameter vectors $\omegab = \left(\pib,\cb,\Gammab\right) \in \Omegab_K$. Specifically, we assume that there exist deterministic positive constants $a_\pib,A_{\cb},a_{\Gammab},A_{\Gammab}$, such that $\omegab$ belongs to $\widetilde{\Omegab}_K$, where
\begin{align} \label{define_BoundedGatesParameters}
	\widetilde{\Omegab}_K = \left\{\omegab \in \Omegab_K : \forall k \in [K], \left\|\bfc_k\right\|_\infty  \le A_{\cb},a_{\Gammab} \le m\left({\Gammab}_{k}\right) \le  M\left({\Gammab}_{k}\right) \le A_{\Gammab}, a_\pib \le \pib_k \right\}.
\end{align}
We denote the space of gating functions as
\begin{align*}
	\cP_K &= \left\{\bfg =\left(\bfg_{k}\left(\cdot;\omegab\right)\right)_{k \in [K]}: \forall k \in [K],\bfg_{k}\left(\yv;\omegab\right)= \frac{ \pib_k\Phi_L\left(\yv;\bfc_k,\Gammab_k\right)  }{\sum_{j=1}^K \pib_j \Phi_L\left(\yv;\bfc_j,\Gammab_j\right)}, \omegab \in \widetilde{\Omegab}_K\right\}.
\end{align*}

\subsubsection{Gaussian experts}
Following the same structure for the means of Gaussian experts from \cite{montuelle2014mixture,nguyen2021nonGLoME}, the set $\UpsilondK$ will be chosen as a tensor product of compact sets of moderate dimension (\eg~a set of polynomials of degree smaller than $d$, whose coefficients are smaller in absolute values than $T_\Upsilonb$). Then, $\UpsilondK$ is defined as a linear combination of a finite set of bounded functions whose coefficients belong to a compact set. This general setting includes polynomial bases when the covariates are bounded, Fourier bases on an interval, as well as suitably renormalized wavelet dictionaries. More specifically, $ \UpsilondK = \otimes_{k\in[K]}\Upsilonb_{k,d} =: \Upsilonb_{k,d}^K$, where $\Upsilonb_{k,d} = \Upsilonb_{b,d}$, $\forall k \in [K]$, and 
\begin{align} \label{eq_define_meanExperts_linearBounded}
	\Upsilonb_{b,d} &= \left\{ \yv \mapsto \left(\sum_{i=1}^{d} \alphab_i^{(j)} \varphi_{\Upsilonb,i}(\yv)\right)_{j\in [D]}=: \left(\upsilonb_{d,j}(\yv)\right)_{j\in [D]}:\norm{\alphab}_\infty \le T_\Upsilonb\right\}.
\end{align}
Here $d \in \Ns, T_\Upsilonb \in \R^+$, and $\left(\varphi_{\Upsilonb,i}\right)_{i \in \left[d\right]}$ is a collection of bounded functions on $\cY$. In particular, we focus on the bounded $\cY$ case and assume that $\cY = [0,1]^L$, without loss of generality. In this case, $\varphi_{\Upsilonb,i}$ can be chosen as monomials with maximum (non-negative) degree $d$: $\yv^{\bfr} = \prod_{l=1}^L \yv^{\bfr_l}_l$. Recall that a multi-index $\bfr = \left(\bfr_l\right)_{l\in[L]}, \bfr_l \in \Ns\cup\left\{0\right\}, \forall l \in [L]$, is an $L$-tuple of nonnegative integers. We define $\left|\bfr\right| = \sum_{l=1}^L \bfr_l$ and the number $|\bfr|$ is called the order or degree of $\yv^{\bfr}$. Then, $ \UpsilondK = \Upsilonb_{p,d}^K$, where
\begin{align}\label{eq_define_meanExperts_polynomial}
	\Upsilonb_{p,d} &= \left\{ \yv \mapsto \left(\sum_{|\bfr|=0}^{d} \alphab_\bfr^{(j)} \yv^{\bfr}\right)_{j\in [D]} =: \left(\upsilonb_{d,j}(\yv)\right)_{j\in [D]}:\norm{\alphab}_\infty \le T_\Upsilonb\right\}.
\end{align}

For the block-diagonal covariances of Gaussian experts, we assume that there exist some positive constants $\lambda_m$ and $\lambda_M$ such that, for every $k \in [K]$,
\begin{align}
	0<\lambda_m \le m\left(\Sigmab_{k}\left(\bfB_k\right)\right) \le M\left(\Sigmab_{k}\left(\bfB_k\right)\right) \le \lambda_M.
\end{align}
Note that this is a quite general assumption and is also used in the block-diagonal covariance selection for Gaussian graphical models of \cite{devijver2018block}.

Next, a characterization of BLLiM model, an affine instance of BLoMPE model, is described in \cref{randomCovModelSelection} and is especially useful for high-dimensional regression data. Note that the BLLiM model relies on an inverse regression trick from
a GLLiM model \citep{deleforge2015high} and the block-diagonal structure hypothesis on the residual covariance
matrices \citep{devijver2018block}.

\subsection{High-dimensional regression via BLLiM models}\label{randomCovModelSelection}
A BLLiM model, as originally introduced in \cite{devijver2017nonlinear}, 
is used to capture the nonlinear relationship between the response and the set of covariates, imposed by a potential hidden graph-structured interaction, from a high-dimensional regression data, typically in the case when $D\gg L$, by the following $K$ locally affine mappings:
\begin{align}\label{eq_locallyaffine}
	\Yv = \sum_{k=1}^K \Indi \left(Z = k\right) \left(\bfA^*_k\Xv +\bfb^*_k + \bfE^*_k\right).
\end{align}
Here, $\Indi$ is an indicator function and $Z$ is a latent variable capturing a cluster relationship, such that $Z=k$, if $\Yv$ originates from cluster $k \in [K]$. Cluster specific affine transformations are defined by matrices $\bfA^*_k \in \R^{L\times D}$ and vectors $\bfb^*_k \in \R^L$. Furthermore, $\bfE^*_k$ are an error terms capturing both the reconstruction error due to the local affine approximations and the observation noise in $\R^L$.

Following the common assumption that $\bfE^*_k$ is a zero-mean Gaussian variable with covariance matrix $\Sigmab_k^* \in \R^{L\times L}$, it holds that
\begin{align} \label{eq_conditional_forward}
	p\left(\Yv = \yv|\Xv = \xv,Z = k;\psib^*_K\right) = \Phi_L\left(\yv;\bfA^*_k\xv +\bfb^*_k,\Sigmab^*_k\right),
\end{align}
where we denote by $\psib^*_K$ the vector of model parameters and $\Phi_L$ is the probability density function (PDF) of a Gaussian distribution of dimension $L$. 
In order to enforce the affine transformations to be local, $\Xv$ is defined as a mixture of $K$ Gaussian components as follows:
\begin{align}\label{eq_marginal_forward}
	p\left(\Xv=\xv|Z = k;\psib^*_K\right) = \Phi_D\left(\xv;\bfc^*_k,\Gammab^*_k\right), p\left(Z=k;\psib^*_k\right) = \pib^*_k,
\end{align}
where $\bfc^*_k \in \R^D, \Gammab^*_k \in \R^{D \times D}$, $\pib^*=\left(\pib^*_k\right)_{k\in[K]} \in \Pib^*_{K-1}$, and $\Pib^*_{K-1}$
is the $K-1$ dimensional probability simplex.
Then, according to formulas for conditional multivariate Gaussian variables and the following hierarchical decomposition 
\begin{align*}
	p\left(\Yv = \yv,\Xv = \xv;\psib^*_K\right)
	&= \sum_{k=1}^K p\left(\Yv = \yv|\Xv = \xv,Z = k;\psib^*_K\right)p\left(\Xv=x|Z = k;\psib^*_K\right)p\left(Z=k;\psib^*_K\right),\nn\\
	&=  \sum_{k=1}^K \pib^*_k \Phi_D\left(\xv;\bfc^*_k,\Gammab^*_k\right) \Phi_L\left(\yv;\bfA^*_k\xv +\bfb^*_k,\Sigmab^*_k\right),
\end{align*}
we obtain the following forward conditional density \citep{deleforge2015high}:
\begin{align} \label{eq_forwardBLLiM}
	p\left(\Yv = \yv|\Xv = \xv;\psib^*_K\right) = \sum_{k=1}^K\frac{ \pib^*_k \Phi_D\left(\xv;\bfc^*_k,\Gammab^*_k\right)  }{\sum_{j=1}^K \pib^*_j \Phi_D\left(\xv;\bfc^*_j,\Gammab^*_j\right)}\Phi_L\left(\yv;\bfA^*_k\xv +\bfb^*_k,\Sigmab^*_k\right),
\end{align}
where $\psib^*_K =  \left(\pib^*,\thetab^*_K \right) \in \Pi_{K-1} \times \Thetab^*_K =:\Psib^*_K$. Here, $\thetab^*_K=\left( \bfc^*_k,\Gammab^*_k,\bfA^*_k,\bfb^*_k,\Sigmab^*_k\right)_{k\in[K]}$ and 
$$\Thetab^*_K = \left(\R^D\times \cS_D^{++} (\R) \times \R^{L\times D}\times \R^L\times \cS_L^{++} (\R)\right)^K.$$
Without assuming anything on the structure on of parameters, the dimension of the model (denoted by $\dim\left(\cdot\right)$), is defined as the total number of parameters that has to be estimated, as follows:
\begin{align*}
	\dim\left(\Psib^*_K\right) = K\left(1+D(L+1)+\frac{D(D+1)}{2}+\frac{L(L+1)}{2}+L\right)-1.
\end{align*}

It is worth mentioning that $	\dim\left(\Psib_K\right)$ is large compared to the sample size (see, e.g., \citealp{deleforge2015high,devijver2017nonlinear,perthame2018inverse} for more details regarding their real data sets), whenever $D$ is large and $D\gg L$. Furthermore, it is more realistic to make assumption on the residual covariance matrices $\Sigmab^*_k$ of error vectors $\bfE^*_k$ rather than on $\Gammab^*_k$ (cf. \citealp[Section 3]{deleforge2015high}). This justifies the use of the inverse regression trick from \cite{deleforge2015high}, which leads a drastic reduction in the number of parameters to be estimated.

More specifically, in \eqref{eq_forwardBLLiM}, the roles of input and response variables should be exchanged such that $\Yv$ becomes the covariates and $\Xv$ plays the role of the multivariate response. Therefore, its corresponding inverse conditional density is defined as a block-diagonal covariance for Gaussian locally-linear mapping (BLLiM) model, based on the previous hierarchical Gaussian mixture model, as follows:
\begin{align}
	p\left(\Xv = \xv|\Yv = \yv,Z = k;\psib_K\right) &= \Phi_D\left(\xv;\Ab_k\yv +\bb_k,\Sigmab_k\right), \label{eq_conditional_inverse}
	\\
	p\left(\Yv=\yv|Z = k;\psib_K\right) &= \Phi_L\left(\yv;\bfc_k,\Gammab_k\right), p\left(Z=k;\psib_k\right) = \pib_k,\label{eq_marginal_inverse}\\
	p\left(\Xv = \xv|\Yv = \yv;\psib_K\right) &= \sum_{k=1}^K\frac{ \pib_k \Phi_L\left(\yv;\bfc_k,\Gammab_k\right)  }{\sum_{j=1}^K \pib_j \Phi_L\left(\yv;\bfc_j,\Gammab_j\right)}\Phi_D\left(\xv;\Ab_k\yv +\bb_k,\Sigmab_k\right)\label{eq_inverseBLLiM},
\end{align}
where $\Sigmab_k$ is a $D \times D$ block-diagonal covariance structure automatically learnt from data and $\psib_K$ is the set
of parameters $\psib_K =  \left(\pib_K,\thetab_K \right) \in \Pib_{K-1} \times \Thetab_K =: \Psib_K$. It is important to note that the BLLiM model imposes the block-diagonal structures on $\left(\Sigmab_k\right)_{k\in[K]}$ to make a trade-off between complexity and sparsity.

An intriguing feature of the GLLiM model is described in \cref{lem_inverse_regression_mapping}.

\begin{lem}[Lemma 2.1 from \cite{nguyen2021nonGLoME}]\label{lem_inverse_regression_mapping}
	The parameter $\psib^*_K$ in the forward conditional PDF, defined in \eqref{eq_forwardBLLiM}, can then be deduced from $\psib_K$ in \eqref{eq_inverseBLLiM} via the following one-to-one correspondence:
	\begin{align*} 
		\thetab_K=\begin{pmatrix}
			\bfc_k\\
			\Gammab_k \\
			\bfA_k \\
			\bfb_k \\
			\Sigmab_k
		\end{pmatrix}_{k \in [K]}
		\mapsto
		\begin{pmatrix}
			\bfc_k^*\\
			\Gammab_k^* \\
			\bfA^*_k \\
			\bfb^*_k \\
			\Sigmab_k^*
		\end{pmatrix}_{k \in [K]}
		=
		\begin{pmatrix}
			\bfA_k\bfc_k +\bfb_k  \\
			\Sigmab_k+\bfA_k\Gammab_k\bfA_k^\top\\
			\Sigmab_k^*\bfA_k^\top\Sigmab_k^{-1}\\
			\Sigmab_k^* (\Gammab_k^{-1}\bfc_k-\bfA_k^\top\Sigmab_k^{-1}\bfb_k)\\
			\left(\Gammab_k^{-1}+\bfA_k^\top\Sigmab_k^{-1}\bfA_k\right)^{-1}
		\end{pmatrix}_{k \in [K]}
		\in \Thetab^*_K,
	\end{align*}
	with the note that $\pib^* \equiv \pib$.
\end{lem}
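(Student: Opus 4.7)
\textbf{Proof plan for \cref{lem_inverse_regression_mapping}.}

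My plan is to derive the forward conditional PDF in \eqref{eq_forwardBLLiM} from the inverse hierarchical model in \eqref{eq_conditional_inverse}--\eqref{eq_marginal_inverse} by building the joint distribution of $(\Xv,\Yv,Z)$ and then reading off the forward parameters through Bayes' rule and standard conditional Gaussian calculus. Since the latent variable $Z$ is intrinsic to the hierarchical model and is not altered by swapping the roles of $\Xv$ and $\Yv$, the identity $\pib^* \equiv \pib$ is automatic from $p(Z=k) = \pib_k$.

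First, I would observe that, conditionally on $Z=k$, the pair $(\Yv,\Xv)$ is jointly Gaussian, since $\Yv \mid Z=k \sim \cN_L(\bfc_k,\Gammab_k)$ and $\Xv \mid \Yv, Z=k \sim \cN_D(\Ab_k\Yv+\bb_k,\Sigmab_k)$. Linear-Gaussian composition then yields the joint Gaussian
\begin{align*}
\begin{pmatrix} \Yv \\ \Xv \end{pmatrix} \Bigm| Z=k \ \sim \ \cN_{L+D}\!\left( \begin{pmatrix} \bfc_k \\ \Ab_k\bfc_k+\bb_k \end{pmatrix}, \begin{pmatrix} \Gammab_k & \Gammab_k\Ab_k^\top \\ \Ab_k\Gammab_k & \Sigmab_k + \Ab_k\Gammab_k\Ab_k^\top \end{pmatrix} \right).
\end{align*}
Marginalising out $\Yv$ immediately gives $\Xv \mid Z=k \sim \cN_D(\bfc_k^*, \Gammab_k^*)$ with $\bfc_k^* = \Ab_k\bfc_k+\bb_k$ and $\Gammab_k^* = \Sigmab_k + \Ab_k\Gammab_k\Ab_k^\top$, matching the first two target formulas.

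Next, I would apply the standard Gaussian conditioning formula to obtain $\Yv \mid \Xv=\xv, Z=k \sim \cN_L(\bfA^*_k \xv + \bfb^*_k, \Sigmab_k^*)$ with
\begin{align*}
\bfA^*_k &= \Gammab_k\Ab_k^\top(\Sigmab_k + \Ab_k\Gammab_k\Ab_k^\top)^{-1}, \\
\bfb^*_k &= \bfc_k - \Gammab_k\Ab_k^\top(\Sigmab_k + \Ab_k\Gammab_k\Ab_k^\top)^{-1}(\Ab_k\bfc_k+\bb_k), \\
\Sigmab_k^* &= \Gammab_k - \Gammab_k\Ab_k^\top(\Sigmab_k + \Ab_k\Gammab_k\Ab_k^\top)^{-1}\Ab_k\Gammab_k.
\end{align*}
The key algebraic step, and the only place where there is real work to do, is to rewrite these expressions in the compact forms stated in the lemma. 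I would invoke the Woodbury matrix identity, which gives $\Sigmab_k^* = (\Gammab_k^{-1}+\Ab_k^\top\Sigmab_k^{-1}\Ab_k)^{-1}$, together with its ``push-through'' corollary $\Gammab_k\Ab_k^\top(\Sigmab_k + \Ab_k\Gammab_k\Ab_k^\top)^{-1} = \Sigmab_k^*\Ab_k^\top\Sigmab_k^{-1}$, yielding $\bfA^*_k = \Sigmab_k^*\Ab_k^\top\Sigmab_k^{-1}$. For $\bfb^*_k$, substituting the expression just obtained for $\bfA^*_k$ and using $\Sigmab_k^*(\Gammab_k^{-1}+\Ab_k^\top\Sigmab_k^{-1}\Ab_k) = \bfI_L$ to rewrite $\bfI_L - \Sigmab_k^*\Ab_k^\top\Sigmab_k^{-1}\Ab_k = \Sigmab_k^*\Gammab_k^{-1}$ delivers $\bfb^*_k = \Sigmab_k^*(\Gammab_k^{-1}\bfc_k - \Ab_k^\top\Sigmab_k^{-1}\bb_k)$.

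Finally, I would assemble the pieces: applying Bayes' rule gives
\begin{align*}
p(\Yv=\yv \mid \Xv=\xv) = \sum_{k=1}^K \frac{\pib_k \Phi_D(\xv;\bfc_k^*,\Gammab_k^*)}{\sum_{j=1}^K \pib_j \Phi_D(\xv;\bfc_j^*,\Gammab_j^*)} \, \Phi_L(\yv;\bfA^*_k\xv+\bfb^*_k,\Sigmab_k^*),
\end{align*}
which matches \eqref{eq_forwardBLLiM} under the stated map, and one-to-one-ness follows because the transformation is an explicit diffeomorphism onto $\Thetab^*_K$ (its inverse is obtained by interchanging the roles of $(\bfc_k,\Gammab_k,\Ab_k,\bb_k,\Sigmab_k)$ with $(\bfc_k^*,\Gammab_k^*,\bfA^*_k,\bfb^*_k,\Sigmab_k^*)$ and applying exactly the same derivation). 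The main obstacle is purely algebraic: bookkeeping through the Woodbury simplifications without sign or transpose errors, especially when re-expressing $\bfb^*_k$.
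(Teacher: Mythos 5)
Your derivation is correct: the joint-Gaussian construction of $(\Yv,\Xv)\mid Z=k$, marginalisation, Gaussian conditioning, and the Woodbury/push-through identities yield exactly the stated map, and the gate weights and $\pib^*\equiv\pib$ follow from Bayes' rule as you say. The paper itself gives no proof of this lemma (it is imported verbatim from the cited reference), and your argument is the standard one underlying the GLLiM inverse-regression correspondence in that line of work, so there is nothing to add.
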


\subsection{Collection of BLoMPE models} \label{sec.collectionOfModels}
In this paper, we choose the degree of polynomials $d$ and the number of components $K$ among finite sets $\cD_\Upsilonb = \left[d_{\max}\right]$ and $\cK = \left[K_{\max}\right]$, respectively, where $d_{\max}\in \Ns$ and $K_{\max}\in \Ns$ may depend on the sample size $n$. Moreover, $\bfB$ is selected among a list of candidate structures $\left(\cB_k\right)_{k \in [K]} \equiv \left(\cB\right)_{k \in [K]}$, where $\cB$ denotes the set of all possible partitions of the covariables indexed by $[D]$, for each cluster of individuals. We wish to estimate the unknown conditional density $s_0$ by conditional densities belonging to the following collection of models:  $ \left(S_\bfm\right)_{\bfm \in \cM}$, $\cM = \left\{\left(K,d,\bfB\right): K \in \cK,d \in \cD_\Upsilonb, \bfB \in \left(\cB\right)_{k \in [K]}\right\}$,
\begin{align} \label{defineModelBoundedBlock}
	S_\bfm 
	=  \left\{(\xv,\yv) \mapsto
	s_{\psib_{K,d}}(\xv| \yv): \psib_{K,d} = \left(\omegab,\upsilonb_{d},\Sigmab\left(\bfB\right)\right) \in \widetilde{\Omegab}_K\times \UpsilondK \times \bfV_K\left(\bfB\right) =:\widetilde{\Psib}_{K,d}\left(\bfB\right)\right\}.
\end{align}
In theory, we would like to consider the whole collection of model $ \left(S_\bfm\right)_{\bfm \in \cM}$.
However, the cardinality of $\cB$ is large; its size is a Bell number. Even for a moderate number of variables $D$, it is not possible to explore the set $\cB$, exhaustively. We restrict our attention to a random subcollection $\cB^R$ of moderate size. For example, we can consider the BLLiM procedure from \citet[Section 2.2]{devijver2017nonlinear}.

Next, in \cref{sec_PMLE_KullbackLeibler}, we introduce a general principle of PMLE and the losses, Kullback--Leibler and Jensen--Kullback--Leibler divergences, that are useful to compare two conditional probability density functions.
\subsection{Penalized maximum likelihood estimator and losses}\label{sec_PMLE_KullbackLeibler}
In the context of PMLE, given the collection of conditional densities $S_\bfm$, we aim to estimate $s_0$ by the $\eta$-minimizer $\widehat{s}_\bfm$ of the negative log-likelihood (NLL):
\begin{align}\label{eq_define_NLL}
	\widehat{s}_\bfm = \argmin_{s_\bfm \in S_\bfm} \sum_{i=1}^n -\ln \left(s_\bfm \left(\xv_i | \yv_i \right)\right) + \eta,
\end{align}
where the error term $\eta$ is necessary when the infimum may not be unique or even not be reached.

As always, using the NLL of the estimate in each model as a criterion is not sufficient. It is an underestimation of the risk of the estimate and this leads to choosing models that are too complex. By adding a suitable penalty $\pen(\bfm)$, one hopes to co	mpensate between the variance term,  $\frac{1}{n}\sum_{i=1}^n -\ln \frac{\widehat{s}_\bfm \left(\xv_i | \yv_i \right)}{s_0 \left(\xv_i | \yv_i \right)} $, and the bias, $\inf_{s_\bfm \in S_\bfm} \tkl\left(s_0,s_\bfm\right)$. For a given choice of $\pen(\bfm)$, the best model $S_{\widehat{\bfm}}$
is chosen as the one whose index is an $\eta'$-almost minimizer of the sum of the NLL and this penalty:
\begin{align}\label{eq_define_penalizedNLL}
	\widehat{\bfm} = \argmin_{\bfm \in \cM} \left(\sum_{i=1}^n -\ln \left(\widehat{s}_\bfm \left(\xv_i | \yv_i\right) \right)+ \pen(\bfm)\right) + \eta'.
\end{align}
Note that $\widehat{s}_{\widehat{\bfm}}$ is then called the $\eta'$-penalized likelihood estimate and depends on both the error terms $\eta$ and $\eta'$. From hereon in, the term best model or estimate is used to indicate that it satisfies  \eqref{eq_define_penalizedNLL}.

In the maximum likelihood approach, the Kullback--Leibler divergence is the most natural loss function, which is defined for two densities $s$ and $t$ by
\begin{align*}
	\kl(s,t) = \begin{cases}
		\int_{\R^D} \ln\left(\frac{s(y)}{t(y)}\right)s(y)dy& \text{ if $sdy$ is absolutely continuous \wrt~$tdy$},\\
		+\infty & \text{ otherwise}.
	\end{cases}
\end{align*}
However, to take into account the structure of conditional
densities and the random covariates $\left(\Yv_i\right)_{i \in [n]}$, we consider the tensorized Kullback--Leibler divergence $\tkl$, defined as:
\begin{align} \label{tensorizedKLD}
	\tkl(s,t) = \E{\Yv}{\frac{1}{n} \sum_{i=1}^n \kl\left(s\left(\cdot| \Yv_i\right),t\left(\cdot| \Yv_i\right)\right)},
\end{align}
if $sdy$ is absolutely continuous \wrt~$tdy$, and $+\infty$ otherwise. Note that if the predictors are fixed, this divergence
is the classical fixed design type divergence in which there is no expectation. We refer to our result as a weak oracle inequality, because its statement is based on a smaller divergence, when compared to $\tkl$, namely the tensorized Jensen--Kullback--Leibler divergence:
\begin{align*}
	\jtkl(s,t) = \E{\Yv}{\frac{1}{n} \sum_{i=1}^n \frac{1}{\rho}\kl\left(s\left(\cdot| \Yv_i\right),\left(1-\rho\right)s\left(\cdot| \Yv_i\right)+\rho t\left(\cdot| \Yv_i\right)\right)},
\end{align*}
with $\rho \in \left(0,1\right)$. We note that $\jtkl$ was first used in \cite{cohen2011conditional}. However, a version of this divergence appears explicitly with $\rho = \frac{1}{2}$ in \cite{massart2007concentration}, and it is also found implicitly in \cite{birge1998minimum}. This loss is always bounded by $\frac{1}{\rho}\ln \frac{1}{1-\rho}$ but behaves like $\tkl$, when $t $ is close to $s$. The main tools in the proof of such a weak oracle inequality are deviation inequalities for sums of random variables and their suprema. These tools require a boundedness assumption on the controlled functions, which is not satisfied by $-\ln\frac{s_\bfm}{s_0}$, and thus also not satisfied by $\tkl$. Therefore, we consider instead the use of $\jtkl$. In particular, in general, it holds that  $C_\rho \thel \le \jtkl \le \tkl$, where  $C_\rho = \frac{1}{\rho}\min\left(\frac{1-\rho}{\rho},1\right)\left(\ln\left(1+\frac{\rho}{1-\rho}\right)-\rho\right)$ (see \citealt[Prop. 1]{cohen2011conditional}) and $\thel$ is a tensorized extension of the squared Hellinger distance $\thel$, defined by
\begin{align*}
	\thel(s,t) = \E{\Yv}{\frac{1}{n} \sum_{i=1}^n \hel\left(s\left(\cdot| \Yv_i\right), t\left(\cdot| \Yv_i\right)\right)}.
\end{align*}
Moreover, if we assume that, for any $\bfm \in \cM$ and any $s_\bfm \in S_\bfm, s_0 d\lambda \ll s_\bfm d\lambda$, then (cf. \citealp{montuelle2014mixture,cohen2011conditional})
\begin{align} \label{montuelle.KLandJKL.1}
	\frac{C_\rho}{2+\ln \norm{s_0/s_\bfm}}_\infty \tkl(s_0,s_\bfm) \le \jtkl(s_0, s_\bfm).
\end{align} 

In \cref{mainResult}, we state our main contribution: a finite-sample oracle type inequality, which ensures that if we have penalized the log-likelihood in an approximate approach, we are able to select a model, which is as good as the oracle.

\section{Main result on oracle inequality} \label{mainResult}

Note that in this article, the constructed collection of models with block-diagonal structures for each cluster of individuals is designed, for example, by the BLLiM procedure from \cite{devijver2017nonlinear}, where each collection of partition is sorted by sparsity level. Nevertheless, our finite-sample oracle inequality still holds for any random subcollection of $\cM$, which is constructed by some suitable tools in the framework of BLoMPE regression models.

\begin{theorem}[Oracle inequality]\label{weakOracleInequality}
	Let $(\xv_i,\yv_i)_{i \in [n]}$ be the observations coming from an unknown conditional density $s_0$. For each $\bfm = \left(K,d,\bfB\right) \in \left(\cK \times \cD_\Upsilonb \times \cB\right)\equiv \cM$, let $S_\bfm$ be define by \eqref{defineModelBoundedBlock}. Assume that there exists $\tau > 0$ and $\epsilon_{KL} > 0$ such that, for all $\bfm \in \cM$, one can find $\bar{s}_\bfm \in S_\bfm$, such that
	\begin{align*}
		\tkl\left(s_0,\bar{s}_\bfm\right) \le \inf_{t \in  S_\bfm} \tkl\left(s_0,t\right) + \frac{\epsilon_{KL}}{n}, \text{ and } \bar{s}_\bfm\ge e^{-\tau} s_0.
	\end{align*}
	Next, we construct some random subcollection $\left(S_\bfm\right)_{\bfm \in \widetilde{\cM}}$ of $\left(S_\bfm\right)_{\bfm \in \cM}$ by letting $\widetilde{\cM}\equiv\left(\cK \times \cD_\Upsilonb \times \cB^R\right) \subset \cM$ such that $\cB^R$ is a random subcollection $\cB$, of moderate size, as described in \cref{sec.collectionOfModels}. Consider the collection $\left(\widehat{s}_\bfm\right)_{\bfm \in \widetilde{\cM}}$ of $\eta$-log likelihood minimizers satisfying \eqref{eq_define_NLL} for all $\bfm \in \widetilde{\cM}$.
	Then, there is a constant $C$ such that for any $\rho \in (0,1)$, and any $C_1 > 1$, there are two constants $\kappa_0$ and $C_2$ depending only on $\rho$ and $C_1$ such that, for every index, $\bfm \in \cM$, $ \xi_\bfm \in \R^+$, $ \Xi = \sum_{\bfm \in \cM}e^{-\xi_\bfm} < \infty$ and 
	\begin{align*}
		\pen(\bfm) \ge \kappa \left[\left(C+\ln n\right)\dim(S_\bfm) + (1 \vee \tau)\xi_\bfm\right],
	\end{align*}
	with $\kappa > \kappa_0$, the $\eta'$-penalized likelihood estimate $\widehat{s}_{\widehat{\bfm}}$, defined as in \eqref{eq_define_penalizedNLL} on the subset $\widetilde{\cM} \subset \cM$,
	satisfies
	\begin{align*}
		\E{}{\jtkl\left(s_0,\widehat{s}_{\widehat{\bfm}}\right)}&\le C_1\E{}{\inf_{\bfm \in \widetilde{\cM}}\left(\inf_{t \in S_\bfm} \tkl\left(s_0,t\right)+2 \frac{\pen(\bfm)}{n}\right)}
		+ C_2(1 \vee \tau)\frac{\Xi^2}{n} + \frac{\eta' + \eta}{n}.
	\end{align*}
\end{theorem}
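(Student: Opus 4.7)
The plan is to apply the general maximum-likelihood model-selection theorem for random subcollections due to \cite{devijver2015finite} (which adapts the Cohen--Le Pennec machinery of \cite{cohen2011conditional} to the conditional density setting). That meta-theorem asserts that an oracle inequality of the type claimed holds provided (i) for every $\bfm \in \cM$ we can exhibit a function $\Phi_\bfm:\R^+\to\R^+$ that dominates the Dudley-type bracketing entropy integral
\[
    \int_0^\sigma \sqrt{\cH_{[\cdot],\thell}(\delta, S_\bfm(\bar s_\bfm, \sigma))} \, d\delta \le \Phi_\bfm(\sigma),
\]
with $\Phi_\bfm(\sigma)/\sigma$ non-increasing, and (ii) the penalty dominates $n\sigma_\bfm^2 + (1\vee\tau)\xi_\bfm$, where $\sigma_\bfm$ is the unique positive solution of $\Phi_\bfm(\sigma) = \sqrt{n}\,\sigma^2$. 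Hence the entire task reduces to controlling the bracketing entropy of the local model $S_\bfm(\bar s_\bfm, \sigma) = \{t\in S_\bfm:\thell(\bar s_\bfm, t)\le \sigma\}$ in a way that yields $\sigma_\bfm^2 \lesssim \dim(S_\bfm)(C+\ln n)/n$.

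The main step I would carry out is therefore a bracketing-entropy bound for $S_\bfm$. I split the conditional density $s_{\psib_{K,d}}(\xv|\yv)$ into three structural ingredients and bound each separately: (a) the Gaussian gating family $\cP_K$, for which I reuse the reparameterization trick of \cite{nguyen2021nonGLoME}, writing each softmax-of-Gaussians weight as a softmax of affine functions of $(\yv,\yv\yv^\top)$ to obtain an entropy bound of order $K(L^2+L+1)$ up to logarithmic factors under the boundedness \eqref{define_BoundedGatesParameters}; (b) the polynomial mean component $\UpsilondK$ defined in \eqref{eq_define_meanExperts_polynomial}, whose coefficients lie in the compact $\ell_\infty$-ball of radius $T_\Upsilonb$, giving an entropy bound of order $K D\binom{L+d}{L}\ln(1/\delta)$ by a standard covering argument for compact Euclidean sets combined with the uniform bound on the monomials over $[0,1]^L$; and (c) the block-diagonal Gaussian expert covariance set $\bfV_K(\bfB)$, for which I invoke the entropy estimate of \cite[Proposition 2]{devijver2018block}, yielding a bound on the order of $\sum_{k=1}^K\sum_{g=1}^{G_k}\card(d_k^{[g]})(\card(d_k^{[g]})+1)/2$. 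Combining (a)--(c) via the tensorization-type lemma (each factor enters additively in bracketing entropy when Gaussian densities are perturbed in mean, covariance and weight, exactly as in \cite{maugis2011data,montuelle2014mixture,nguyen2021nonGLoME}) produces an integrated Dudley function $\Phi_\bfm(\sigma)=\sigma\sqrt{\dim(S_\bfm)}\bigl(C+\sqrt{\ln(1/\sigma)}\bigr)$.

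Solving the fixed-point equation $\Phi_\bfm(\sigma_\bfm) = \sqrt{n}\,\sigma_\bfm^2$ then gives $\sigma_\bfm^2 \lesssim \dim(S_\bfm)(C+\ln n)/n$. This matches exactly the penalty shape $\pen(\bfm) \ge \kappa\bigl[(C+\ln n)\dim(S_\bfm) + (1\vee\tau)\xi_\bfm\bigr]$ required by the theorem. Plugging into the random-subcollection theorem of \cite{devijver2015finite}, and using the existence of $\bar s_\bfm \in S_\bfm$ with $\tkl(s_0,\bar s_\bfm)\le \inf_{t\in S_\bfm}\tkl(s_0,t)+\epsilon_{KL}/n$ and $\bar s_\bfm \ge e^{-\tau}s_0$ (the second condition being exactly what converts the Kullback--Leibler approximation term into something comparable to $\jtkl$ via \eqref{montuelle.KLandJKL.1}), yields the stated inequality.

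I expect the main obstacle to be ingredient (c) combined with (a): the block-diagonal covariance set $\bfV_K(\bfB)$ carries a combinatorial piece ($\bfB$ itself is a partition structure) and a continuous piece (the actual positive-definite entries inside each block), and one must use the entropy bound of \cite{devijver2018block} \emph{locally} inside a Hellinger ball, while simultaneously the Gaussian gating parameters interact with the expert parameters through $\yv$ in a non-separable way. The reparameterization of \cite{nguyen2021nonGLoME} is what enables decoupling of the gating contribution from the expert contribution; once that decoupling is in place, the polynomial/covariance entropies add up, with the block structure contributing at most $\sum_k \ln(\card \cB)$ through the cardinality term $e^{-\xi_\bfm}$ in $\Xi$, which is absorbed in the penalty term $(1\vee\tau)\xi_\bfm$. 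Controlling the interplay of all three sources of complexity so that the Dudley integral remains of the required order is the delicate step; the remaining calculations are routine adaptations of \cite{cohen2011conditional,montuelle2014mixture,devijver2015finite,devijver2018block,nguyen2021nonGLoME}.
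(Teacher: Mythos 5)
Your proposal is correct and follows essentially the same route as the paper: reduce to the random-subcollection MLE theorem of \cite{devijver2015finite}, verify the entropy assumption by decomposing the bracketing entropy of $S_\bfm$ into the Gaussian gating part (handled by the reparameterization trick of \cite{nguyen2021nonGLoME}), the bounded polynomial means (via \cite{montuelle2014mixture}), and the block-diagonal covariances (via \cite{devijver2018block}), then solve the Dudley fixed-point equation to obtain $\cD_\bfm \lesssim \dim(S_\bfm)(C+\ln n)$. The only minor imprecision is attributing the role of the condition $\bar{s}_\bfm \ge e^{-\tau}s_0$ to the KL/JKL comparison \eqref{montuelle.KLandJKL.1}; in fact it is a hypothesis consumed internally by the deviation bounds of the meta-theorem, but this does not affect the validity of your argument.
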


\begin{remark}
	In \cref{weakOracleInequality}, the finite-sample oracle inequality compares performances of our estimator with the best model in the collection. However, \cref{weakOracleInequality} allows us to approximate well a rich class of conditional densities if we take enough degree of polynomials of Gaussian expert means and/or enough clusters in the context of mixture of Gaussian experts \citep{jiang1999hierarchical,mendes2012convergence,nguyen2016universal,ho2019convergence,nguyen2020approximationMoE}. This leads to the term on the right hand side being small, for $\cD_\Upsilonb$ and $\cK$ well-chosen.
	
	Furthermore, in the context of MoE regression models, our non-asymptotic oracle inequality, \cref{weakOracleInequality}, can be considered as a complementary result to a classical asymptotic theory \citep[Theorems 1,2, and 3]{khalili2010new}, to a finite-sample oracle inequality on the whole collection of models \citep{montuelle2014mixture,nguyen2021nonGLoME} and to an $l_1$-oracle inequality focusing on the Lasso estimation properties rather than the model selection procedure \citep{nguyen2020l1oracle}.
	
	In particular, our finite-sample oracle inequality resolves completely the following two important problems in the area of MoE regression models: (1) What number of mixture components $K$ should be chosen, given the size $n$ of the training data, and (2) Whether it is better to use a few complex experts or combine many simple experts, given
	the total number of parameters. Note that, such problems are considered in the work of \cite{mendes2012convergence}, where the authors provided some qualitative insights and suggested a practical method for choosing $K$ and $d$ involving a complexity penalty or cross-validation. Furthermore, their model is only for a non-regularized maximum-likelihood estimation and thus is not suitable in the high-dimensional setting.
\end{remark}


\section{Proof of the oracle inequality} \label{sec.proofOracleIneq}

\paragraph{Sketch of the proof}
To work with conditional density estimation in the BLoMPE regression models, in \cref{sec_general_model_selection_random}, we need to present a general theorem for model selection: \cref{thm_5_1_devijver2015finite}.
It is worth mentioning that, because the model collection constructed by the BLLiM procedure is random, we have to use a model selection theorem for MLE among a random subcollection (cf. \citealp[Theorem 5.1]{devijver2015finite} and \citealp[Theorem 7.3]{devijver2018block}), which is an extension of a whole collection of conditional densities from \citet[Theorem 2]{cohen2011conditional}, and of \citet[Theorem 7.11]{massart2007concentration}, working only for density estimation.
Then, we explain how we use \cref{thm_5_1_devijver2015finite} to get the oracle inequality: \cref{weakOracleInequality} in \cref{sec_proof_weakOracleInequality}. To this end, our model collection has to satisfy some regularity assumptions, which are proved in \cref{proofLemma}. The main difficulty in proving our oracle inequality lies in bounding the bracketing entropy of the Gaussian gating functions of the BLoMPE model and Gaussian experts with block-diagonal covariance matrices. To overcome the former issue, we follow a reparameterization trick of the Gaussian gating parameters space \citep{nguyen2021nonGLoME}. For the second one, we utilize the recent novel result on block-diagonal covariance matrices in \cite{devijver2018block}.


\subsection{Model selection theorem for MLE among a random subcollection} \label{sec_general_model_selection_random}

Before stating the general theorem, we begin by discussing our assumptions. We work here in a more general context, with $(\Xv,\Yv) \in \cX \times \cY$, and $\left(S_\bfm\right)_{\bfm \in \cM}$ defining a model collection indexed by $\cM$. 

First, we impose a structural assumption on each model indexed by $\bfm\in \cM$, regarding the bracketing entropy, defined by \eqref{eq_definition_BracketingEntropy}, conditioned on the model $S_\bfm$ \wrt~the tensorized Hellinger divergence. Recall that the bracketing entropy of a set $S$ with respect to any distance $d$, denoted by $\cH_{\left[\cdot\right],d}(\left(\delta,S\right))$, is defined as the logarithm of the minimal number $\cN_{\left[\cdot\right],d}\left(\delta,S\right)$ of brackets $\left[t^{-},t^{+}\right]$ covering $S$, such that $d(t^-,t^+) \le \delta$. That is,
\begin{align}\label{eq_definition_BracketingEntropy}
	\cN_{\left[\cdot\right],d}\left(\delta,S\right):= \min\left\{n \in \Ns : \exists t^-_1,t^+_1,\ldots,t^-_n,t^+_n \text{ s.t }d(t^-_k,t^+_k) \le \delta,S \subset \bigcup_{k=1}^n \left[t^-_k,t^+_k\right] \right\},
\end{align}
where the bracket $s \in \left[t^-_k,t^+_k\right]$ is defined by $t^{-}_k(\xv,\yv) \le s(\xv,\yv)\le t^{+}_k(\xv,\yv)$, $\forall (\xv,\yv) \in \cX \times \cY$.

This leads to the \cref{assumption_H} (H).
\begin{assumption}[H]\label{assumption_H}
	For every model $S_\bfm$ in the collection $\cS$, there is a non-decreasing function $\phi_\bfm$ such that $\delta \mapsto \frac{1}{\delta}\phi_\bfm(\delta)$ is non-increasing on $\left(0,\infty\right)$ and for every $\delta \in \R^+$,
	\begin{align*}
		\int_{0}^\delta \sqrt{\cH_{\left[.\right],\thell}\left(\delta,S_\bfm\left(\widetilde{s},\delta\right)\right)}d\delta \le  \phi_\bfm(\delta),
	\end{align*}
	where $S_\bfm\left(\widetilde{s},\delta\right) = \left\{s_\bfm \in S_\bfm : \thell \left(\widetilde{s},s_\bfm\right) \le  \delta\right\}$. The model complexity $\cD_\bfm$ of $S_\bfm$ is then defined as $n \delta^2_\bfm$, where $\delta_\bfm$ is the unique root of $\frac{1}{\delta} \phi_\bfm(\delta) = \sqrt{n} \delta$. 
\end{assumption} 
This bracketing entropy integral, often called the Dudley integral, plays an important role in empirical processes theory (cf. \citealp{van1996weak,kosorok2007introduction}). Observe that the model complexity does not depend on the bracketing entropies of the global models $S_\bfm$, but rather on those of smaller localized sets $S_\bfm\left(\widetilde{s},\delta\right)$.

For technical reasons, a seperability assumption, which always satisfied in the setting of this paper, is also required. \cref{assumption_seperability} (Sep) is a mild condition, which is classical in empirical process theory \citep{van1996weak} and allows us to work with a countable subset of $S_\bfm$. 
\begin{assumption}[Sep]\label{assumption_seperability}
	For every model $S_\bfm$, there exists some countable subset $S'_\bfm$ of $S_\bfm$ and a set $\cY'_\bfm$ with $\iota \left(\cY \setminus \cY'_\bfm\right) = 0$, where $\iota$ denotes Lebesgue measure, such that for every $t \in S_\bfm$, there exists some sequence $\left(t_k\right)_{k \geq 1}$ of elements of $S'_\bfm$, such that for every $x \in \cX$ and every $y \in \cY'_\bfm, \ln\left(t_k\left(y | x\right)\right) \xrightarrow{k \rightarrow + \infty}  \ln\left(t\left(y | x\right)\right)$.
\end{assumption} 
Furthermore, we also need an information theory type assumption to control the complexity of our collection. We assume the existence of a Kraft-type inequality for the collection \citep{massart2007concentration,barron2008mdl}.
\begin{assumption}[K] \label{assumption.Kraft-type}
	There is a family $\left(\xi_\bfm\right)_{\bfm \in \cM}$ of non-negative numbers and a real number $\Xi$, such that
	\begin{align*}
		\sum_{\bfm \in \cM}e^{-\xi_\bfm} \le \Xi < + \infty.
	\end{align*}
\end{assumption}

We can now state the main result of \citep[Theorem 5.1]{devijver2015finite} for the model selection theorem for MLE among a random subcollection.
\begin{theorem}[Theorem 5.1 from \cite{devijver2015finite}]\label{thm_5_1_devijver2015finite}
	Let $(\xv_i,\yv_i)_{i \in [n]}$ be observations coming from an unknown conditional density $s_0$. Let the model collection $\cS = \left(S_\bfm\right)_{\bfm \in \cM}$ be an at most countable collection of conditional density sets. Assume that \cref{assumption_H} (H), \cref{assumption_seperability} (Sep), and \cref{assumption.Kraft-type} (K) hold for every $\bfm \in \cM$. Let $\epsilon_{KL} > 0$, and $\bar{s}_\bfm \in S_\bfm$, such that
	\begin{align*}
		\tkl\left(s_0,\bar{s}_\bfm\right) \le \inf_{t \in  S_\bfm} \tkl\left(s_0,t\right) + \frac{\epsilon_{KL}}{n};
	\end{align*}
	and let $\tau > 0$, such that
	\begin{align}\label{eq.assumptionRS}
		\bar{s}_\bfm\ge e^{-\tau} s_0.
	\end{align}
	Introduce $\left(S_\bfm\right)_{\bfm \in \widetilde{\cM}}$, a random subcollection of $\left(S_\bfm\right)_{\bfm \in \cM}$.
	Consider the collection $\left(\widehat{s}_\bfm\right)_{\bfm \in \widetilde{\cM}}$ of $\eta$-log likelihood minimizer satisfying \eqref{eq_define_NLL} for all $\bfm \in \widetilde{\cM}$.
	Then, for any $\rho \in (0,1)$, and any $C_1 > 1$, there are two constants $\kappa_0$ and $C_2$ depending only on $\rho$ and $C_1$, such that, for every index $\bfm \in \cM$,
	\begin{align*}
		\pen(\bfm) \ge \kappa \left(\cD_\bfm + (1 \vee \tau)\xi_\bfm\right),
	\end{align*}
	with $\kappa > \kappa_0$, and where the model complexity $\cD_\bfm$ is defined in \cref{assumption_H},
	the $\eta'$-penalized likelihood estimate $\widehat{s}_{\widehat{\bfm}}$, defined as in \eqref{eq_define_penalizedNLL} on the subset $\widetilde{\cM} \subset \cM$,
	satisfies
	\begin{align*}
		\E{}{\jtkl\left(s_0,\widehat{s}_{\widehat{\bfm}}\right)}&\le C_1\E{}{\inf_{\bfm \in \widetilde{\cM}}\left(\inf_{t \in S_\bfm} \tkl\left(s_0,t\right)+2 \frac{\pen(\bfm)}{n}\right)} + C_2(1 \vee \tau)\frac{\Xi^2}{n} + \frac{\eta' + \eta}{n}.
	\end{align*}
\end{theorem}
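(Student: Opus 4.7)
The plan is to adapt the penalized maximum likelihood machinery of Massart \citep{massart2007concentration} and Cohen--Le Pennec \citep{cohen2011conditional} to the random subcollection setting. Starting from the defining inequality of the penalized estimator, for any fixed $\bfm \in \widetilde{\cM}$ and any $\bar s_\bfm \in S_\bfm$,
\begin{align*}
  \sum_{i=1}^n \bigl(-\ln \widehat{s}_{\widehat{\bfm}}(\xv_i|\yv_i)\bigr) + \pen(\widehat{\bfm}) \le \sum_{i=1}^n \bigl(-\ln \bar s_\bfm(\xv_i|\yv_i)\bigr) + \pen(\bfm) + \eta' + \eta,
\end{align*}
I would introduce the mixture $(1-\rho)s_0 + \rho t$, following the classical trick that converts unbounded log-likelihood ratios into the functional $-\rho^{-1}\ln\bigl(1 - \rho + \rho\, t/s_0\bigr)$, whose conditional expectation under $s_0$ is $\jtkl(s_0,t)$ and whose sup-norm is controlled once $\bar s_\bfm \ge e^{-\tau} s_0$ is invoked. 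After substitution and rearrangement, $\jtkl(s_0,\widehat{s}_{\widehat{\bfm}})$ appears dominated by the bias $\tkl(s_0,\bar s_\bfm)$, the penalty gap $\pen(\bfm)-\pen(\widehat{\bfm})$, and a centered empirical process that must be bounded uniformly in $\bfm$.

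The central step is controlling, \emph{uniformly over all} $\bfm \in \cM$ (not merely over the random $\widetilde{\cM}$, since $\widehat{\bfm}$ is data-driven), the localized empirical process
\begin{align*}
  Z_\bfm(\sigma) = \sup_{t \in S_\bfm,\, \thell(\bar s_\bfm,t)\le \sigma}\Bigl|\tfrac{1}{n}\sum_{i=1}^n \bigl(f_{t,\bfm}(\xv_i,\yv_i) - \E{}{f_{t,\bfm}(\Xv,\Yv)}\bigr)\Bigr|,
\end{align*}
where $f_{t,\bfm}$ is the centered log-likelihood ratio to $(1-\rho)s_0 + \rho\, \bar s_\bfm$. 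Assumption~(Sep) reduces the supremum to a countable one; Assumption~(H), via a Dudley-type chaining on the bracketing entropy of the localized balls $S_\bfm(\bar s_\bfm,\sigma)$, yields $\E{}{Z_\bfm(\sigma)} \lesssim \phi_\bfm(\sigma)/\sqrt n$. By the fixed-point definition of $\delta_\bfm$ this scales as $\sqrt{\cD_\bfm/n}\,\sigma$. Talagrand's (or Bousquet's) concentration inequality then converts the expectation into a deviation bound in which the sup-norm envelope $\tau$ and the variance enter with their natural weights, producing a high-probability bound of order $\sqrt{\cD_\bfm\sigma^2/n} + (1\vee\tau)\,x/n$ on $Z_\bfm(\sigma)$ outside an event of probability at most $e^{-x}$.

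The random subcollection is then handled by a weighted union bound with weights $e^{-\xi_\bfm}$ supplied by Assumption~(K): applying the deviation estimate at level $x = \xi_\bfm + u$ and summing yields a single event, of probability at least $1 - \Xi e^{-u}$, on which the control holds simultaneously for every $\bfm \in \cM$, hence in particular for every realization of $\widetilde{\cM}$ and for $\widehat{\bfm}$. A peeling argument over dyadic shells $\{t : \jtkl(s_0,t) \in [2^k\delta_\bfm^2, 2^{k+1}\delta_\bfm^2)\}$ converts the $\sigma$-dependence into a term linear in $\jtkl(s_0,\widehat{s}_{\widehat{\bfm}})$; choosing $\kappa$ larger than a threshold $\kappa_0(\rho,C_1)$ lets one absorb this term into the left-hand side with leading constant $C_1 > 1$. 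The contributions $\cD_\bfm/n$ and $(1\vee\tau)\xi_\bfm/n$ are dominated by $\pen(\bfm)/n$, while the residual deviation probabilities integrate, via $\int_0^\infty \Xi e^{-u}\,du$ and its square (because of the squared-risk formulation), to the $C_2(1\vee\tau)\Xi^2/n$ remainder. Taking expectation and infimizing over $\bfm \in \widetilde{\cM}$ delivers the stated inequality.

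The main obstacle is the interaction between the data-dependent selection of $\widehat{\bfm}$ in the random subcollection $\widetilde{\cM}$ and the need for a single high-probability event valid across all of $\cM$: the concentration argument must be decoupled from the (possibly data-dependent) construction of $\widetilde{\cM}$, and it is precisely the Kraft weights $\xi_\bfm$, paid for by the linear term in the penalty, that make this decoupling possible at cost $\Xi^2/n$. A related delicacy is the balance in Talagrand's inequality between the envelope (controlled by $\tau$ through condition~\eqref{eq.assumptionRS}) and the Bernstein variance; since $\tau$ governs the boundedness but not the variance, the factor $(1 \vee \tau)$ appears in \emph{both} the coefficient of $\xi_\bfm$ in the penalty and the residual term, and cannot be dispensed with unless a global bound on $\ln\|s_0/\bar s_\bfm\|_\infty$ is assumed.
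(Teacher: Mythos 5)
The paper does not prove this theorem: it is imported verbatim as Theorem~5.1 of \cite{devijver2015finite} (itself an extension of \citet[Theorem 7.11]{massart2007concentration} and \citet[Theorem 2]{cohen2011conditional} to conditional densities and to a random subcollection), so there is no in-paper argument to compare yours against. Judged against the proof in that cited source, your outline reproduces the correct architecture: the $\eta'$-minimization inequality, the Jensen--Kullback--Leibler linearization that turns $-\ln(t/s_0)$ into the bounded functional $-\rho^{-1}\ln(1-\rho+\rho t/s_0)$ with envelope controlled by $\tau$ through \eqref{eq.assumptionRS}, the localized empirical-process bound $\phi_\bfm(\sigma)/\sqrt{n}$ obtained from \cref{assumption_H} by chaining over the bracketing nets (with \cref{assumption_seperability} reducing the supremum to a countable one), Talagrand-type concentration, and peeling plus absorption for $\kappa>\kappa_0(\rho,C_1)$. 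You also correctly isolate the one genuinely new point in the random-subcollection version, namely that the deviation bounds must be made uniform over the \emph{whole} deterministic collection $\cM$ via the Kraft weights $(\xi_\bfm)_{\bfm\in\cM}$ of \cref{assumption.Kraft-type}, precisely because $\widetilde{\cM}$ and $\widehat{\bfm}$ are data-dependent; this is what the $(1\vee\tau)\xi_\bfm$ term in the penalty pays for and what produces the $\Xi^2/n$ remainder. Two caveats: your peeling is stated over shells of $\jtkl(s_0,\cdot)$, whereas the localization in \cref{assumption_H} (and hence the fixed point $\delta_\bfm$) is expressed in the Hellinger-type distance $\thell$, so the shells should be taken in $\thell$ and converted afterwards using $C_\rho\thel\le\jtkl$; and the derivation of the constants $\kappa_0$, $C_2$ and of the squared factor $\Xi^2$ is asserted rather than carried out. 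As a proof outline it is faithful to the cited result; as a self-contained proof it would still need the quantitative maximal inequality deduced from \cref{assumption_H} to be written out.
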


In the next section, we apply \cref{thm_5_1_devijver2015finite} to prove \cref{weakOracleInequality}. Consequently, the penalty can be chosen roughly proportional to the intrinsic dimension of the model, and thus of the order of the variance.


\subsection{Proof of Theorem \ref{weakOracleInequality}}\label{sec_proof_weakOracleInequality}
It should be stressed that all we need is to verify that \cref{assumption_H} (H), \cref{assumption_seperability} (Sep) and \cref{assumption.Kraft-type} (K) hold for every $\bfm \in \cM$. According to the result from \citet[Section 5.3]{devijver2015finite}, \cref{assumption_seperability} (Sep) holds when we consider Gaussian densities and the assumption defined by \eqref{eq.assumptionRS} is true if we assume further that the true conditional density $s_0$ is bounded and compactly supported. Furthermore, since we restricted $d$ and $K$ to $\cD_\Upsilonb = \left[d_{\max}\right]$ and $\cK = \left[K_{\max}\right]$, respectively, it is true that there exists a family $\left(\xi_\bfm\right)_{\bfm \in \cM}$ and $\Xi >0$ such that, \cref{assumption.Kraft-type} (K) is satisfied.
Therefore, the main steps of the proof for the remaining \cref{assumption_H} (H) are presented in this \cref{sec_proof_weakOracleInequality}. All technical results are deferred to \cref{proofLemma}.

Note that the definition of complexity of model $S_\bfm$ in \cref{assumption_H} (H) is related to a classical entropy dimension of a compact set \wrt~a Hellinger type divergence $\thell$, thanks to the following \cref{prop.DudleyClassicalEntropy}, which is established in \citep[Proposition 2]{cohen2011conditional}.
\begin{prop}[Proposition 2 from \cite{cohen2011conditional}]\label{prop.DudleyClassicalEntropy}
	If, for any $\delta \in (0,\sqrt{2}]$, $ \cH_{\left[.\right],\thell}\left(\delta,S_\bfm\right) \le \dim(S_\bfm)\left(C_\bfm+\ln\left(\frac{1}{\delta}\right)\right)$, then the function 
	\begin{align*}
		\phi_\bfm\left(\delta\right) = \delta \sqrt{\dim\left(S_\bfm\right)}\left(\sqrt{C_\bfm}+\sqrt{\pi}+\sqrt{\ln\left(\frac{1}{\min\left(\delta,1\right)}\right)}\right)
	\end{align*} satisfies \cref{assumption_H} (H). Furthermore, the unique solution $\delta_\bfm$ of $\frac{1}{\delta} \phi_\bfm\left(\delta\right) =  \sqrt{n} \delta$ satisfies
	\begin{align*}
		n \delta^2_\bfm \le \dim(S_\bfm) \left(2 \left(\sqrt{C_\bfm} + \sqrt{\pi}\right)^2 + \left(\ln \frac{n}{\left(\sqrt{C_\bfm} + \sqrt{\pi}\right)^2\dim\left(S_\bfm\right)}\right)_+\right).
	\end{align*}
\end{prop}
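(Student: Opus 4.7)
The plan is to prove the two parts separately: verifying that $\phi_\bfm$ meets \cref{assumption_H} (H) amounts to estimating a Dudley entropy integral, while the bound on $n\delta_\bfm^2$ is a short fixed-point computation.

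For (H), I would first reduce to bounding the entropy of $S_\bfm$ itself by the trivial inclusion $S_\bfm(\widetilde{s},u) \subset S_\bfm$, and note that for $u > \sqrt{2}$ no brackets are needed since the tensorized squared Hellinger distance between probability densities is at most $2$. On $(0,\sqrt{2}]$ the hypothesis gives $\cH_{[\cdot],\thell}(u, S_\bfm) \le \dim(S_\bfm)(C_\bfm + \ln(1/u))$, whence the subadditivity $\sqrt{a+b}\le\sqrt{a}+\sqrt{b}$ separates the $\sqrt{C_\bfm}$ term and reduces the task to bounding $\int_0^\delta \sqrt{\ln(1/u)}\, du$ for $\delta \le 1$. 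The substitution $u = \delta e^{-t}$ yields
\begin{align*}
\int_0^\delta \sqrt{\ln(1/u)}\, du \;=\; \delta \int_0^\infty \sqrt{\ln(1/\delta)+t}\; e^{-t}\, dt \;\le\; \delta \sqrt{\ln(1/\delta)} + \delta\, \Gamma(3/2),
\end{align*}
and since $\Gamma(3/2) = \sqrt{\pi}/2 \le \sqrt{\pi}$, assembling these pieces recovers exactly $\phi_\bfm(\delta)$. For $\delta > 1$ I would split the integral at $u = 1$ and absorb the residual piece on $(1,\sqrt{2}]$ into the $\sqrt{C_\bfm}$ term. The required monotonicity of $\phi_\bfm(\delta)/\delta = \sqrt{\dim(S_\bfm)}(\sqrt{C_\bfm}+\sqrt{\pi}+\sqrt{\ln(1/\min(\delta,1))})$ is immediate; if strict monotonicity of $\phi_\bfm$ itself is needed, one can replace it by its non-decreasing envelope without affecting either the Dudley bound or the non-increasing ratio.

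For the bound on $n\delta_\bfm^2$, set $A := \sqrt{C_\bfm}+\sqrt{\pi}$ and $D := \dim(S_\bfm)$. If $\delta_\bfm \ge 1$, the fixed-point equation reduces to $A\sqrt{D} = \sqrt{n}\,\delta_\bfm$, so $n\delta_\bfm^2 = A^2 D \le 2A^2 D$; this occurs precisely when $n \le A^2 D$, in which case $\bigl(\ln(n/(A^2 D))\bigr)_+ = 0$ and the stated bound holds trivially. If $\delta_\bfm < 1$, put $L := \ln(1/\delta_\bfm) > 0$. The defining equation becomes $\sqrt{n/D}\,\delta_\bfm = A + \sqrt{L}$, and taking logarithms,
\begin{align*}
2L \;=\; \ln(n/D) - 2\ln(A + \sqrt{L}) \;\le\; \ln(n/D) - 2\ln A \;=\; \ln\!\left(\frac{n}{A^2 D}\right),
\end{align*}
so $L \le \tfrac{1}{2}\ln(n/(A^2 D))$. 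Combined with $n\delta_\bfm^2 = D(A+\sqrt{L})^2 \le 2D(A^2 + L)$, this yields $n\delta_\bfm^2 \le 2DA^2 + D\ln(n/(A^2 D))$, which is the stated bound once one observes that $n > A^2 D$ in this case makes the logarithm positive and hence equal to its positive part.

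The main obstacle I anticipate is the Dudley integral estimate: to recover the sharp factor $\sqrt{\ln(1/\delta)}$ at the endpoint rather than a cruder expression like $\sqrt{C_\bfm+\ln(1/\delta)}$, the subadditivity $\sqrt{a+b}\le\sqrt{a}+\sqrt{b}$ must be applied \emph{inside} the integral after the exponential substitution rather than before, and one must separately track the region $\delta \in (1,\sqrt{2}]$ to ensure (H) holds on all of $(0,\infty)$ and not only on $(0,1]$.
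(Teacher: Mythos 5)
Your proof is correct. Note that the paper does not actually prove this proposition---it imports it verbatim from \cite{cohen2011conditional}---and your argument (bounding the localized entropy by the global one, the substitution $u=\delta e^{-t}$ with $\sqrt{a+t}\le\sqrt{a}+\sqrt{t}$ and $\Gamma(3/2)=\sqrt{\pi}/2$ for the Dudley integral, then the two-case fixed-point computation via $L\le\tfrac{1}{2}\ln(n/(A^2\dim(S_\bfm)))$) is essentially the standard proof behind that citation; your handling of the fact that $\phi_\bfm$ as written fails to be non-decreasing just below $\delta=1$, by passing to its running maximum, correctly patches a detail that the cited statement glosses over.
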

Then, \cref{assumption_H} (H) is proved via \cref{prop.DudleyClassicalEntropy} using the fact that
\begin{align}\label{eq.bractketingEntropyModelSm}
	\cH_{\left[.\right],\thell}\left(\delta,S_\bfm\right) \le \dim(S_\bfm)\left(C_\bfm+\ln\left(\frac{1}{\delta}\right)\right),
\end{align}
where $C_\bfm$ is a constant depending on the model. Before proving the previous statement \eqref{eq.bractketingEntropyModelSm}, we need to define the following distance over conditional densities:
\begin{align*}
	\sup_\yv d_\xv(s,t) = \sup_{\yv \in \cY} \left(\int_\cX \left(\sqrt{s(\xv| \yv)} - \sqrt{t(\xv| \yv)}\right)^2 d\xv \right)^{1/2}.
\end{align*}
This leads straightforwardly to $\thel(s,t) \le  \sup_\yv d^2_\xv(s,t)$. Then, we also define
\begin{align*}
	\sup_\yv d_k\left(\bfg,\bfg'\right) = \sup_{\yv\in \cY} \left(\sum_{k=1}^K \left(\sqrt{\bfg_k(\yv)}-\sqrt{\bfg'_k(\yv)}\right)^2\right)^{1/2},
\end{align*}
for any gating functions $\bfg$ and $\bfg'$.
To this end, given any densities $s$ and $t$ over $\cX$, the following distances, depending on $\yv$, is constructed as follows:
\begin{align*}
	\sup_\yv \max_k d_\xv(s,t) &= \sup_{\yv \in \cY} \max_{k \in [K]} d_\xv\left(s_k(\cdot,\yv),t_k(\cdot,\yv)\right)
	=\sup_{\yv \in \cY} \max_{k \in [K]} \left(\int_\cX \left(\sqrt{s_k(\xv,\yv)}-\sqrt{t_k(\xv,\yv)}\right)^2d\xv\right)^{1/2}.
\end{align*}

Then \eqref{eq.bractketingEntropyModelSm} can be established by first decomposing the entropy term between the Gaussian gating functions and the Gaussian experts. Indeed, there are two possible ways to decompose the bracketing entropy of $S_\bfm$ based on the reparameterization trick \citep{nguyen2021nonGLoME}, for $\cP_K$ via $\cW_k$ and Gaussian experts $\gkdb$.
\begin{align*}
	\cW_K &= \left\{\cY \ni \yv \mapsto \left(\ln\left(\pib_k\Phi\left(\yv;\bfc_k,\Gammab_k\right)\right)\right)_{k \in [K]} =: \left(\bfw_{k}(\yv;\omegab)\right)_{k \in [K]}= \bfw\left(\yv;\omegab\right) : \omegab \in \widetilde{\Omegab}_K\right\},\\
	\cP_K&= \left\{\cY \ni \yv \mapsto\left(\frac{e^{\bfw_k(\yv)}}{\sum_{l=1}^K e^{\bfw_{l}(\yv)}}\right)_{k\in[K]} =: \left(\bfg_{k}\left(\yv;\bfw\right)\right)_{k \in [K]}, \bfw \in \cW_K\right\} 	\text{, and }\\
	\gkdb &= \left\{\cX \times \cY \ni (\xv,\yv) \mapsto \left(\Phi\left(\xv;\upsilonb_{k,d}(\yv),\Sigmab_k\left(\bfB_k\right)\right)
	\right)_{k \in [K]}: \upsilonb_{d} \in \UpsilondK, \Sigmab(\bfB) \in \bfV_K(\bfB) \right\}.
\end{align*}
For the first approach, we can use  \cref{lem.bracketingEntropyDecomposition} \citep[Lemma 5]{montuelle2014mixture}:
\begin{lem}\label{lem.bracketingEntropyDecomposition}
	For all $\delta \in (0,\sqrt{2}]$ and $\bfm \in \cM$,
	\begin{align*}
		\cH_{[\cdot],\sup_\yv d_\xv }\left(\delta,S_\bfm\right) \le 	\cH_{[\cdot],\sup_\yv d_k }\left(\frac{\delta}{5},\cP_K\right) + 	\cH_{[\cdot],\sup_\yv\max_k d_\xv }\left(\frac{\delta}{5},\gkdb\right).
	\end{align*}
\end{lem}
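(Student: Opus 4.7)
The plan is to manufacture a $\delta$-bracketing cover of $S_\bfm$ by combining a $(\delta/5)$-bracketing cover of the gating family $\cP_K$ with a $(\delta/5)$-bracketing cover of the expert family $\gkdb$ multiplicatively. Fix $\delta \in (0, \sqrt{2}]$ and $\bfm \in \cM$. I would take $\{[\bfg^-_j, \bfg^+_j]\}_{j=1}^{N_\cP}$ to be a minimal $(\delta/5)$-bracketing cover of $\cP_K$ w.r.t.\ $\sup_\yv d_k$, and $\{[\Phi^-_{l,k}, \Phi^+_{l,k}]_{k\in[K]}\}_{l=1}^{N_\cG}$ a minimal $(\delta/5)$-bracketing cover of $\gkdb$ w.r.t.\ $\sup_\yv \max_k d_\xv$. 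For each pair $(j,l)$, the natural candidate brackets are
\begin{align*}
s^{\pm}_{j,l}(\xv \mid \yv) = \sum_{k=1}^K \bfg^{\pm}_{j,k}(\yv)\, \Phi^{\pm}_{l,k}(\xv, \yv).
\end{align*}
By non-negativity, $s^-_{j,l} \le s \le s^+_{j,l}$ whenever $s \in S_\bfm$ has its gating in $[\bfg^-_j, \bfg^+_j]$ and its experts in $\{[\Phi^-_{l,k}, \Phi^+_{l,k}]\}_k$, so the $N_\cP \cdot N_\cG$ pairs cover $S_\bfm$; taking logarithms then gives the claimed additive bound, provided each pair is a $\delta$-bracket in the $\sup_\yv d_\xv$ metric.

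To verify $\sup_\yv d_\xv(s^-_{j,l}, s^+_{j,l}) \le \delta$, I would introduce the intermediate mixture $s^m(\xv \mid \yv) = \sum_k \bfg^-_{j,k}(\yv)\, \Phi^+_{l,k}(\xv, \yv)$ and split via the triangle inequality. For the first piece, the Cauchy--Schwarz lower bound $\int \sqrt{s^- s^m}\, d\xv \ge \sum_k \bfg^-_k \int \sqrt{\Phi^-_k \Phi^+_k}\, d\xv$ (a pointwise consequence of $(\sum_k \sqrt{a_k b_k})^2 \le (\sum_k a_k)(\sum_k b_k)$ with $a_k = \bfg^-_k \Phi^-_k$, $b_k = \bfg^-_k \Phi^+_k$), combined with $\sum_k \bfg^-_k \le 1$, immediately yields $d_\xv^2(s^-, s^m) \le \sum_k \bfg^-_k\, d_\xv^2(\Phi^-_k, \Phi^+_k) \le (\delta/5)^2$.

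The harder, and to my mind main, obstacle will be the second piece $d_\xv(s^m, s^+)$. The same Cauchy--Schwarz step, now applied with $a_k = \bfg^-_k \Phi^+_k$, $b_k = \bfg^+_k \Phi^+_k$, gives
\begin{align*}
d_\xv^2(s^m, s^+) \le \sum_{k=1}^K \left(\sqrt{\bfg^+_k(\yv)} - \sqrt{\bfg^-_k(\yv)}\right)^2 \int \Phi^+_k(\xv, \yv)\, d\xv,
\end{align*}
but the upper bracket $\Phi^+_k$ is not a probability density, so the factor $\int \Phi^+_k\, d\xv$ must be controlled separately; this is where the constant $5$ really gets pinned down. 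I would handle it through the $L^2$ triangle inequality applied to $\sqrt{\Phi^{\pm}_k}$: since $\int \Phi^-_k\, d\xv \le 1$, $\sqrt{\int \Phi^+_k\, d\xv} \le 1 + d_\xv(\Phi^-_k, \Phi^+_k) \le 1 + \delta/5$, hence $\int \Phi^+_k\, d\xv \le (1 + \delta/5)^2$. Combining this with $\sum_k (\sqrt{\bfg^+_k} - \sqrt{\bfg^-_k})^2 = d_k^2(\bfg^-, \bfg^+) \le (\delta/5)^2$ gives $\sup_\yv d_\xv(s^m, s^+) \le (1 + \delta/5)(\delta/5)$; adding the two pieces via the triangle inequality yields $\sup_\yv d_\xv(s^-, s^+) \le (2 + \delta/5)(\delta/5) \le \delta$ on $(0, \sqrt{2}]$, completing the verification, and hence the lemma after taking logarithms of the bracket count.
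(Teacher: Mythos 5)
Your proof is correct. Note that the paper does not actually prove this lemma itself --- it imports it verbatim as Lemma~5 of \cite{montuelle2014mixture} --- and your argument (product brackets $\sum_k \bfg^{\pm}_k \Phi^{\pm}_k$, the intermediate mixture $\sum_k \bfg^-_k \Phi^+_k$, Cauchy--Schwarz on the cross term, and the $\sqrt{\smash[b]{\int \Phi^+_k}} \le 1+\delta/5$ control of the non-normalized upper bracket) is essentially the standard proof given in that reference, so there is nothing to flag.
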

Note that \cref{lem.bracketingEntropyDecomposition} boils down to assuming that $\Yv$ is bounded. To weaken this assumption, we are going to use the smaller distance: $\thell$, for the entropy with bracketing of $S_\bfm$ although bounding, such bracketing entropies for $\cW_K$ and $\cG_{K,\cB}$ becomes much more challenging. Consequently, this leads to the second approach via \cref{lem_bracketingEntropyDecomposition2} \citep[Lemma 6]{montuelle2014mixture}.
\begin{lem}\label{lem_bracketingEntropyDecomposition2}
	For all $\delta \in (0,\sqrt{2}]$,
	\begin{align*}
		\cH_{[\cdot],\thell }\left(\delta,S_\bfm\right) \le \cH_{[\cdot],d_{\cP_K} }\left(\frac{\delta}{2},\cP_K\right) + 	\cH_{[\cdot], d_{\gkdb} }\left(\frac{\delta}{2},\gkdb\right),
	\end{align*}
	where
	\begin{align*}
		d^2_{\cP_K}\left(g^+,g^-\right) &= \E{\Yv}{\frac{1}{n}\sum_{i=1}^n d^2_k\left(g^+ \left(\Yv_i\right),g^-(\Yv_i)\right)} = \E{\Yv}{\frac{1}{n}\sum_{i=1}^n \sum_{k=1}^K \left(\sqrt{g^+_k \left(\Yv_i\right)}-\sqrt{g^-_k \left(\Yv_i\right)}\right)^2},\\
		d^2_{\gkdb}\left(\Phi^+,\Phi^-\right) &= \E{\Yv}{\frac{1}{n}\sum_{i=1}^n \sum_{k=1}^K d^2_\xv\left(\Phi^+_k \left(\cdot,\Yv_i\right),\Phi^-_k\left(\cdot,\Yv_i\right)\right)}
		\nn\\
		&= \E{\Yv}{\frac{1}{n}\sum_{i=1}^n \sum_{k=1}^K \int_{\cX}\left(\sqrt{\Phi^+_k \left(\xv,\Yv_i\right)}-\sqrt{\Phi^+_k \left(\xv,\Yv_i\right)}\right)^2 d\xv}.
	\end{align*}
\end{lem}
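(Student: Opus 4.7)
}
The plan is to show how any pair of brackets, one for the gating set $\cP_K$ at radius $\delta/2$ (for $d_{\cP_K}$) and one for the expert set $\gkdb$ at radius $\delta/2$ (for $d_{\gkdb}$), can be combined into a bracket for $S_\bfm$ of radius $\delta$ under $\thell$. Taking minimal coverings of $\cP_K$ and $\gkdb$ and multiplying cardinalities will then yield the announced log‑additive inequality.

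More precisely, fix $\varepsilon = \delta/2$. Given a $d_{\cP_K}$-bracket $[g^-, g^+]$ with $d_{\cP_K}(g^+, g^-) \le \varepsilon$ and a $d_{\gkdb}$-bracket $[\Phi^-, \Phi^+]$ with $d_{\gkdb}(\Phi^+, \Phi^-) \le \varepsilon$ (interpreted componentwise in $k$), I define
\begin{align*}
    t^{\pm}(\xv,\yv) = \sum_{k=1}^K g^{\pm}_k(\yv)\, \Phi^{\pm}_k(\xv,\yv).
\end{align*}
Since gating weights and expert densities are non‑negative, any $s_{\psib_{K,d}} \in S_\bfm$ whose gating lies in $[g^-,g^+]$ and whose experts lie in $[\Phi^-,\Phi^+]$ satisfies $t^- \le s_{\psib_{K,d}} \le t^+$ pointwise, so $[t^-,t^+]$ is a genuine bracket. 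It remains to show $\thell(t^+,t^-)\le \delta$.

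For this, I would introduce the intermediate function $u(\xv,\yv) = \sum_{k=1}^K g^+_k(\yv)\,\Phi^-_k(\xv,\yv)$ and apply the triangle inequality for $\thell$:
\begin{align*}
    \thell(t^+,t^-) \le \thell(t^+,u) + \thell(u,t^-).
\end{align*}
For the expert piece, I would use the pointwise Cauchy--Schwarz bound
\begin{align*}
    \Bigl(\sqrt{\textstyle\sum_k g_k^+\Phi_k^+} - \sqrt{\textstyle\sum_k g_k^+\Phi_k^-}\Bigr)^{2} \le \sum_{k=1}^K g_k^+(\yv) \bigl(\sqrt{\Phi_k^+} - \sqrt{\Phi_k^-}\bigr)^{2},
\end{align*}
so that integrating in $\xv$, using $g^+_k(\yv)\le 1$ (which may be enforced by truncating the bracket against the simplex constraint on $\cP_K$), and tensorizing over $\yv$ yields $\thell(t^+,u)^{2} \le d_{\gkdb}(\Phi^+,\Phi^-)^{2} \le \varepsilon^{2}$. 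For the gating piece, the reverse triangle inequality for the $\ell_{2}$ norm applied to $\bigl(\sqrt{g_k^+\Phi_k^-}\bigr)_k$ and $\bigl(\sqrt{g_k^-\Phi_k^-}\bigr)_k$ gives
\begin{align*}
    \Bigl(\sqrt{\textstyle\sum_k g_k^+\Phi_k^-} - \sqrt{\textstyle\sum_k g_k^-\Phi_k^-}\Bigr)^{2} \le \sum_{k=1}^K \Phi_k^-(\xv,\yv)\bigl(\sqrt{g_k^+(\yv)} - \sqrt{g_k^-(\yv)}\bigr)^{2},
\end{align*}
and integrating in $\xv$ uses the crucial sub‑probability property $\int_\cX \Phi_k^-(\xv,\yv)\,d\xv \le \int_\cX \Phi_k(\xv,\yv)\,d\xv = 1$, which holds because $\Phi_k^-$ is a lower bracket of a probability density. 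Tensorizing yields $\thell(u,t^-)^{2} \le d_{\cP_K}(g^+,g^-)^{2} \le \varepsilon^{2}$. Combining gives $\thell(t^+,t^-) \le 2\varepsilon = \delta$.

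Finally, if $\cN_{1}$ and $\cN_{2}$ are the minimal numbers of $\delta/2$-brackets for $\cP_K$ (under $d_{\cP_K}$) and for $\gkdb$ (under $d_{\gkdb}$), respectively, the above construction produces at most $\cN_{1}\cdot \cN_{2}$ brackets covering $S_\bfm$ at radius $\delta$ under $\thell$, and taking logarithms yields the stated inequality. The main obstacle I anticipate is the two pointwise Hellinger estimates: the Cauchy--Schwarz bound for the expert piece and the $\ell_{2}$ reverse‑triangle bound for the gating piece, the latter relying essentially on the sub‑probability property of the lower bracket $\Phi_k^-$. Choosing the intermediate $u$ so that exactly one of the factors varies at a time is what decouples the two entropies, and verifying the $g_k^+\le 1$ truncation is harmless is a minor technicality that I would dispatch by noting that one may always clip the upper gating bracket against $1$ without increasing $d_{\cP_K}$.
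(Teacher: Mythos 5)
Your argument is correct: the product-bracket construction $t^{\pm}=\sum_{k=1}^K g^{\pm}_k\Phi^{\pm}_k$, the triangle inequality for $\thell$ through the intermediate $u=\sum_{k=1}^K g^{+}_k\Phi^{-}_k$, and the two pointwise $\ell^2$ reverse-triangle (Minkowski) bounds --- together with the harmless clipping $g^{+}_k\le 1$ and the sub-probability property $\int_{\cX}\Phi^{-}_k(\xv,\yv)\,d\xv\le 1$ of a lower bracket --- give exactly $\thell(t^-,t^+)\le \delta/2+\delta/2$ and the multiplicative covering count, hence the additive entropy bound. Note that the paper does not prove this lemma but imports it as Lemma~6 of \cite{montuelle2014mixture}; your proof is essentially the standard proof of that cited result (the only cosmetic point being that your first pointwise estimate is the same $\ell^2$ reverse triangle inequality as the second, not Cauchy--Schwarz).
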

Next, we make use of \cref{lem_Inequality_Hellinger_Supx_dy_dk,}, which is proved in \cref{sec.lem_Inequality_Hellinger_Supx_dy_dk}, to provide an upper bound on the bracketing entropy of $S_\bfm$ ($\cP_K$) on distances $\thell$ ($d_{\cP_K}$), respectively.
\begin{lem}\label{lem_Inequality_Hellinger_Supx_dy_dk}
	It holds that
	\begin{align}
		\thell(s,t) &\le \sup_\yv d_\xv(s,t) \text{, and }
		\cH_{[\cdot],\thell}\left(\delta,S_\bfm\right) \le \cH_{[\cdot],\sup_\yv d_\xv }\left(\delta,S_\bfm\right), \label{eq_Inequality_Hellinger_Supx_dy}\\
		d_{\cP_K}\left(g^+,g^-\right)&\le \sup_\yv d_k(g^+,g^-) \text{, and } \cH_{[\cdot],d_{\cP_K} }\left(\frac{\delta}{2},\cP_K\right) \le \cH_{[\cdot],\sup_\yv d_k }\left(\frac{\delta}{2},\cP_K\right). \label{eq_Inequality_Hellinger_Supx_dk}
	\end{align}
\end{lem}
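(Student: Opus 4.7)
The plan is to prove both claims by combining a pointwise-over-$\Yv$ monotonicity argument with the standard bracket-transfer principle: if one distance is dominated by another, then brackets of a given size under the larger distance are automatically brackets under the smaller one.

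For \eqref{eq_Inequality_Hellinger_Supx_dy}, I would first unwind the definitions. By construction, the pointwise squared Hellinger coincides with the square of $d_\xv$, so
\begin{align*}
\thel(s,t) = \E{\Yv}{\frac{1}{n}\sum_{i=1}^n \hel\bigl(s(\cdot|\Yv_i), t(\cdot|\Yv_i)\bigr)} = \E{\Yv}{\frac{1}{n}\sum_{i=1}^n \int_\cX \Bigl(\sqrt{s(\xv|\Yv_i)}-\sqrt{t(\xv|\Yv_i)}\Bigr)^2 d\xv}.
\end{align*}
Each integrand is dominated almost surely by $\sup_{\yv \in \cY} d^2_\xv(s,t)$ uniformly in $i$ and in the realization of $\Yv_i$; taking expectation and then the square root gives $\thell(s,t) \le \sup_\yv d_\xv(s,t)$.

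The bracketing entropy consequence follows from a general fact: a bracket $[t^-,t^+]$ is defined by the pointwise inequality $t^- \le s \le t^+$ on $\cX\times \cY$, which is independent of the distance; only the size condition on the bracket depends on the chosen metric. Hence, if $\{[t_j^-,t_j^+]\}_{j=1}^N$ is a minimal $\delta$-bracketing cover of $S_\bfm$ under $\sup_\yv d_\xv$, then by the distance inequality already proved, $\thell(t_j^-, t_j^+)\le \sup_\yv d_\xv(t_j^-, t_j^+)\le \delta$, so the same family is a $\delta$-bracketing cover under $\thell$. This yields $\cN_{[\cdot],\thell}(\delta, S_\bfm) \le \cN_{[\cdot], \sup_\yv d_\xv}(\delta, S_\bfm)$, and the claimed entropy bound follows by taking logarithms.

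For \eqref{eq_Inequality_Hellinger_Supx_dk}, the argument is essentially identical. Using the definition of $d_{\cP_K}$ from \cref{lem_bracketingEntropyDecomposition2},
\begin{align*}
d^2_{\cP_K}(g^+, g^-) = \E{\Yv}{\frac{1}{n}\sum_{i=1}^n d^2_k\bigl(g^+(\Yv_i), g^-(\Yv_i)\bigr)} \le \sup_{\yv \in \cY} d^2_k(g^+, g^-),
\end{align*}
so $d_{\cP_K}(g^+, g^-) \le \sup_\yv d_k(g^+, g^-)$; the entropy inequality at level $\delta/2$ then follows from the same bracket-transfer principle applied to $\cP_K$. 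The proof is almost entirely mechanical, and I anticipate no genuine obstacle; the only point worth flagging carefully is the bracket-transfer step, since one must emphasize that the pointwise ordering defining a bracket does not depend on the metric, so only the size condition transfers, and it transfers in the direction of the smaller distance.
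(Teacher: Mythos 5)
Your proposal is correct and follows essentially the same route as the paper: the distance inequalities are obtained by bounding the average over the $\Yv_i$ by the supremum over $\yv\in\cY$, and the entropy inequalities follow from the observation that a bracket is defined by a pointwise ordering independent of the metric, so any $\delta$-bracketing cover for the larger distance is automatically one for the smaller (the paper phrases this as an inclusion of the corresponding sets of admissible covering numbers, so their minima are ordered). No gaps.
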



\cref{lem_bracketingEntropyDecomposition2,lem_Inequality_Hellinger_Supx_dy_dk} imply that
\begin{align*}
	\cH_{[\cdot],\thell}\left(\delta,S_\bfm\right) \le \cH_{[\cdot],\sup_\yv d_k }\left(\frac{\delta}{2},\cP_K\right) + 	\cH_{[\cdot],d_{\gkdb} }\left(\frac{\delta}{2},\gkdb\right).
\end{align*}

We next define the metric entropy of the set $\cW_K$: $\entropy(\delta,\cW_K)$, which measures the logarithm of the minimal number of balls of radius at most $\delta$, according to a distance $d_{\norm{\sup}_\infty}$, needed to cover $\cW_K$,  where
\begin{align} \label{eq.def.metricEntropy-Distance}
	d_{\norm{\sup}_\infty} \left(\left(\bfs_k\right)_{k\in[K]},\left(\bft_k\right)_{k\in[K]}\right) = \max_{k \in [K]}\sup_{\xv \in \cX} \norm{\bfs_k(\xv) - \bft_k(\xv)}_2,
\end{align}
for any $K$-tuples of functions $\left(\bfs_k\right)_{k\in[K]}$ and $\left(\bft_k\right)_{k\in[K]}$. Here, $\bfs_k,\bft_k:\cX \ni \xv \mapsto \bfs_k(\xv),\bft_k(\xv) \in  \R^p, \forall k \in [K]$, and given $\bfx \in \cX, k\in [K]$, $\norm{\bfs_k(\xv) - \bft_k(\xv)}_2$ is the Euclidean distance in $\R^p$. 

Based on this metric, one can first relate the bracketing entropy of $\cP_{K}$ to $\entropy(\delta,\cW_K)$, and then obtain the upper bound for its entropy via \cref{Extend.montuelle.lemma4new}.
It is worth mentioning that for the Gaussian gating parameters, the technique for handling the logistic weights of \cite{montuelle2014mixture} is not directly applicable to the BLoMPE setting. Therefore, by using the previous reparameterization trick, \citet[Lemmas 5.4 and 5.8]{nguyen2021nonGLoME} allow for the control of the metric entropy of the parameters of Gaussian gating functions.

\begin{lem}[Lemmas 5.5 from \cite{nguyen2021nonGLoME} ]\label{Extend.montuelle.lemma4new}
	For all $\delta \in (0,\sqrt{2}]$,
	\begin{align*}
		\cH_{[\cdot],\sup_\yv d_k}\left(\frac{\delta}{2},\cP_K\right) 
		\le \entropy\left(\frac{ 3 \sqrt{3}\delta}{8 \sqrt{K}},\cW_K\right) \le \dim\left(\cW_K\right) \left(C_\cW + \ln \left(\frac{ 8 \sqrt{K}}{3 \sqrt{3}\delta}\right)\right),
	\end{align*}
	where  $C_\cY := \sup_{\yv \in \cY} \left\|\yv\right\|_\infty < \infty$ whenever $\cY$ is bounded, $\cU:= \cY \times \cY \times \left[a_\Gammab,A_\Gammab\right]^{L^2}$, 
	\begin{align*}
		C_\cW &:= \frac{1}{\dim\left(\cW_K\right)}\ln C_0, C_0 := \left(6C_\cb C_\cY L\right)^{KL}\left(6C_\Gammab A_\Gammab L^2\right)^{\frac{L(L+1)}{2}K} \left(\frac{3}{a_\pib}\right)^{K-1} K \left(2 \pi e\right)^{K/2},\\
		0 < \left(C_{\cb}\right)_{1,\ldots,L}^\top&:=\max_{k\in[K]}\sup_{ \left(\yv,\bfc_k,\vect\left(\Gammab_k\right)\right)\in \cU } \left|\nabla_{\bfc_k} \ln\left|\Phi_L(\yv;\bfc_k,\Gammab_k)\right|\right| < \infty,\\
		0 < \left(C_{\Sigmab}\right)_{1,\ldots,L^2}^\top&:=\max_{k\in[K]}\sup_{\left(\yv,\bfc_k,\vect\left(\Gammab_k\right)\right) \in \cU} \left|\nabla_{\vect\left(\Gammab_k\right)}\ln\left| \Phi_L(\yv;\bfc_k,\Gammab_k)\right|\right| < \infty,
	\end{align*}
	and $\vect(\cdot)$ denotes the
	vectorization operator that stacks the columns of a matrix into a vector.
	
\end{lem}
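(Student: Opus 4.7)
The lemma contains two inequalities that I would prove in sequence: a bracketing-to-metric transfer from a metric cover of $\cW_K$ to brackets on $\cP_K$, and a compact-parameter entropy bound on $\cW_K$ itself. The common device is the reparameterization $\bfw_k(\yv;\omegab) = \ln(\pib_k \Phi_L(\yv;\bfc_k,\Gammab_k))$, which turns the ratio form of $\bfg_k$ into the softmax of $\bfw$ and thereby removes the nonlinear rational dependence from the covering argument.

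For the first inequality I would fix an $\varepsilon$-cover $\{\bfw^{(j)}\}_j$ of $\cW_K$ in the distance $\dsup$ and, for each center $\bfw^{(j)}$, define the bracket
\[
g_k^{-}(\yv) = \frac{e^{\bfw_k^{(j)}(\yv)-\varepsilon}}{\sum_{l=1}^K e^{\bfw_l^{(j)}(\yv)+\varepsilon}},\qquad g_k^{+}(\yv) = \frac{e^{\bfw_k^{(j)}(\yv)+\varepsilon}}{\sum_{l=1}^K e^{\bfw_l^{(j)}(\yv)-\varepsilon}}.
\]
Monotonicity of the exponential gives $g_k^{-}(\yv) \le \bfg_k(\yv;\bfw) \le g_k^{+}(\yv)$ whenever $\dsup(\bfw,\bfw^{(j)})\le\varepsilon$, so these are genuine brackets containing every element of $\cP_K$ associated to the $\varepsilon$-ball around $\bfw^{(j)}$. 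To estimate the bracket width I would compute
\[
\sup_\yv d_k^2(g^+,g^-) = \sup_\yv \sum_{k=1}^K \bigl(\sqrt{g_k^{+}(\yv)}-\sqrt{g_k^{-}(\yv)}\bigr)^2
\]
by Taylor expansion in $\varepsilon$, combining $\sum_k \bfg_k(\cdot;\bfw^{(j)})=1$ with the elementary inequality $(\sqrt{a}-\sqrt{b})^2 \le (a-b)^2/(a+b)$. The resulting estimate is of order $K\varepsilon^2$, and solving for the value of $\varepsilon$ that makes the width at most $\delta/2$ gives $\varepsilon = 3\sqrt{3}\delta/(8\sqrt{K})$, after tracking the numerical constants produced by the expansion.

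For the second inequality I would exploit the fact that $\widetilde{\Omegab}_K$ is a compact subset of $\R^{\dim(\cW_K)}$ with $\dim(\cW_K) = (K-1) + KL + KL(L+1)/2$, the parameter count coming from the simplex $\Pib_{K-1}$, the cluster means $\bfc_k$, and the symmetric matrices $\Gammab_k$. The map $\omegab\mapsto\bfw(\cdot;\omegab)$ is Lipschitz in $\omegab$, uniformly in $\yv\in\cY$, with separate Lipschitz moduli $1/a_\pib$, $C_\cb$ and $C_\Sigmab$ for the three parameter blocks; the moduli $C_\cb,C_\Sigmab$ are finite thanks to boundedness of $\cY$, the upper bounds $A_\cb,A_\Gammab$, and the eigenvalue lower bound $a_\Gammab$ on $\Gammab_k$. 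A standard volumetric covering of the product parameter space then yields
\[
\entropy(\varepsilon,\cW_K) \le \dim(\cW_K)\bigl(C_\cW + \ln(1/\varepsilon)\bigr),
\]
where $C_\cW = (\ln C_0)/\dim(\cW_K)$ collects the Lipschitz constants and the diameter factors $C_\cY,A_\Gammab,L,K$ into the explicit $C_0$ displayed in the lemma statement. Substituting $\varepsilon = 3\sqrt{3}\delta/(8\sqrt{K})$ then produces the advertised right-hand side.

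The main obstacle is the clean extraction of the explicit constant $3\sqrt{3}/8$ and the $\sqrt{K}$ scaling in Step~1: a crude additive bracket $g_k^\pm = \bfg_k(\yv;\bfw^{(j)}) \pm 2\varepsilon$, justified by the softmax Lipschitz bound $|\partial_{w_j}g_k|\le 2$, already gives $d_k\lesssim\sqrt{K}\,\varepsilon$ directly, while the sharper multiplicative bracket above yields the same rate after the square-root differences are expanded, and balancing the two to produce the optimal numerical constant requires some bookkeeping. Step~2 is thereafter routine; the reparameterization is essential throughout, since $\omegab\mapsto\bfg_k(\yv;\omegab)$ itself is not uniformly Lipschitz in any useful sense on $\widetilde{\Omegab}_K$.
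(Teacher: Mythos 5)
Your two-step strategy (reparameterize the gating ratios as a softmax of $\bfw\in\cW_K$, transfer an $\varepsilon$-net of $\cW_K$ into brackets on $\cP_K$, then bound $\entropy(\cdot,\cW_K)$ by a Lipschitz-plus-volumetric argument over the compact parameter blocks $\pib,\cb,\Gammab$) is exactly the strategy of the cited source; note that the present paper does not reprove this lemma at all but imports it verbatim from \cite{nguyen2021nonGLoME}, so there is no in-paper proof to compare against beyond the remark that the reparameterization trick is what makes the Gaussian gating weights tractable. Two of your intermediate quantitative claims are off, though neither is fatal. First, your ``crude additive bracket'' $g_k^{\pm}=\bfg_k\pm 2\varepsilon$ does \emph{not} give $d_k\lesssim\sqrt{K}\,\varepsilon$: since $u\mapsto\sqrt{u}$ is not Lipschitz at $0$, the best one gets from $(\sqrt{a}-\sqrt{b})^2\le a-b$ is $d_k^2\le 4K\varepsilon$, i.e.\ $d_k\lesssim\sqrt{K\varepsilon}$, which would force $\varepsilon\propto\delta^2/K$ and a worse entropy bound; this shortcut must be discarded. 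Second, for your multiplicative brackets one has identically $g_k^{+}=e^{2\varepsilon}\bfg_k(\cdot;\bfw^{(j)})$ and $g_k^{-}=e^{-2\varepsilon}\bfg_k(\cdot;\bfw^{(j)})$, so $\sum_k(\sqrt{g_k^{+}}-\sqrt{g_k^{-}})^2=4\sinh^2(\varepsilon)$ with \emph{no} factor of $K$; your claimed order $K\varepsilon^2$ is a valid but non-tight upper bound, so the proof still closes (a smaller radius only increases $\entropy$), but the advertised constant $3\sqrt{3}\delta/(8\sqrt{K})$ does not emerge from that computation. In the source it arises from bounding $\|\nabla_{\bfw}\sqrt{\bfg_k}\|_1=\sqrt{\bfg_k}\,(1-\bfg_k)\le 2/(3\sqrt{3})$ (the maximum of $x\mapsto\sqrt{x}(1-x)$ on $[0,1]$), which gives $\max_k\lvert\sqrt{g_k^{+}}-\sqrt{g_k^{-}}\rvert\le\tfrac{4}{3\sqrt{3}}\varepsilon$, followed by the crude step $d_k\le\sqrt{K}\max_k\lvert\cdot\rvert$; setting $\tfrac{4\sqrt{K}}{3\sqrt{3}}\varepsilon=\tfrac{\delta}{2}$ yields exactly $\varepsilon=3\sqrt{3}\delta/(8\sqrt{K})$. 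Your Step~2 (parameter counts $K-1$, $KL$, $KL(L+1)/2$ and the gradient moduli $1/a_\pib$, $C_\cb$, $C_\Sigmab$ feeding the explicit $C_0$) is consistent with the stated constant and is the intended argument.
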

\cref{lem.bracketingEntropyGaussianBlock} allows us to construct the Gaussian brackets to handle the metric entropy for Gaussian experts, which is established in \cref{proof.lem.bracketingEntropyGaussianBlock}.
\begin{lem}\label{lem.bracketingEntropyGaussianBlock}
	\begin{align}\label{eq.bracketingEntropyGaussianBlock}
		\cH_{[\cdot],d_{\gkdb} }\left(\frac{\delta}{2},\gkdb\right) \le \dim\left(\gkdb\right) \left(C_{\gkdb} + \ln \left(\frac{1}{\delta}\right)\right).
	\end{align}
\end{lem}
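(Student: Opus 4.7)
The plan is to first decompose the bracketing entropy of the product class $\gkdb$ across its $K$ components, and then to bound the bracketing entropy of each single-component Gaussian class $\gdbk$ by combining a covering of the polynomial mean coefficients with a covering of the block-diagonal covariance parameters, following the Gaussian bracket constructions of \cite{maugis2011non} and \cite{devijver2018block}.

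First, I would exploit the additivity of $d^2_{\gkdb}$ over $k$: if for each $k \in [K]$ one has a bracket $[\Phi^-_k, \Phi^+_k]$ on $\gdbk$ with $\sup_{\yv \in \cY} d_\xv(\Phi^-_k, \Phi^+_k) \le \delta/(2\sqrt{K})$, then the product bracket satisfies $d_{\gkdb}\left((\Phi^-_k)_k, (\Phi^+_k)_k\right) \le \delta/2$. Therefore,
$$\cH_{[\cdot], d_{\gkdb}}\!\left(\frac{\delta}{2}, \gkdb\right) \le \sum_{k=1}^K \cH_{[\cdot], \sup_\yv d_\xv}\!\left(\frac{\delta}{2\sqrt{K}}, \gdbk\right),$$
and it suffices to control each summand separately.

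For fixed $k$, the class $\gdbk$ is parameterized by $(\alphab, \Sigmab_k(\bfB_k))$, with $\alphab$ lying in an $\ell^\infty$-ball of radius $T_\Upsilonb$ of dimension equal to $D$ times the cardinality of the polynomial basis, and with $\Sigmab_k(\bfB_k)$ a block-diagonal matrix of prescribed structure $\bfB_k$ and bounded spectrum $\lambda_m \le m(\Sigmab_k) \le M(\Sigmab_k) \le \lambda_M$, so that the free covariance entries live in a compact set of total dimension $\sum_g \card(d^{[g]}_k)(\card(d^{[g]}_k)+1)/2$. The whole parameter space therefore admits an $\epsilon$-net of cardinality at most $(C_1/\epsilon)^{\dim(\gdbk)}$. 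Around each net point I would build a Gaussian bracket by slightly inflating the covariance and shifting the mean, as is standard in Gaussian mixture bracketing (cf.~\cite{maugis2011non,devijver2018block}), so that every Gaussian with parameter $\epsilon$-close to the center is pointwise sandwiched. The key Lipschitz estimate is that, because each basis function $\varphi_{\Upsilonb,i}$ is bounded on the compact set $\cY$, a perturbation of $\alphab$ by $\epsilon$ yields a \emph{uniform-in-$\yv$} perturbation of $\upsilonb_{k,d}(\yv)$ of order $\epsilon$; combined with the spectral bounds on $\Sigmab_k(\bfB_k)$, an explicit computation of the Hellinger distance between two Gaussians gives $\sup_\yv d_\xv(\Phi^-, \Phi^+) \le L\epsilon$ with $L$ depending only on $T_\Upsilonb, \lambda_m, \lambda_M$ and $\sup_{i,\yv}|\varphi_{\Upsilonb,i}(\yv)|$. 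Choosing $\epsilon \asymp \delta/\sqrt{K}$, summing over $k$, and using $\dim(\gkdb) = \sum_k \dim(\gdbk)$ yields the claimed bound \eqref{eq.bracketingEntropyGaussianBlock}, with $C_{\gkdb}$ absorbing $\ln(K)$ and the uniform Lipschitz constants.

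The main difficulty is constructing brackets whose size is controlled \emph{uniformly in} $\yv$, in spite of the $\yv$-dependence of the Gaussian mean $\upsilonb_{k,d}(\yv)$. This is managed precisely by the compactness of $\cY$ together with the uniform boundedness of the basis $\{\varphi_{\Upsilonb,i}\}_i$, which turns a coefficient perturbation into a uniform mean perturbation. The block-diagonal constraint on $\Sigmab_k(\bfB_k)$, while superficially delicate, is not a genuine obstruction: the structure $\bfB_k$ is fixed within each model $\bfm$, so the admissible covariances form a compact submanifold of $\cS_D^{++}$ whose bracketing covering entropy is governed by its effective dimension block-by-block, exactly as in \cite{devijver2018block}.
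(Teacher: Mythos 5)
Your proposal follows essentially the same route as the paper: decompose the tensorized bracketing entropy of the product class $\gkdb$ into a sum over the $K$ components at radius $\delta/(2\sqrt{K})$, then bracket each $\gdbk$ by combining a discretization of the bounded polynomial-coefficient space (uniform in $\yv$ thanks to the bounded basis on compact $\cY$) with the block-diagonal covariance nets of \cite{devijver2018block}, and sandwich via the standard inflated-covariance Gaussian brackets of \cite{maugis2011non}. The only discrepancy is a bookkeeping one: the paper counts the free covariance parameters per block as $\card\left(d^{[g]}_k\right)\left(\card\left(d^{[g]}_k\right)-1\right)/2$ rather than your $\card\left(d^{[g]}_k\right)\left(\card\left(d^{[g]}_k\right)+1\right)/2$, which changes the dimension $\dim\left(\gdbk\right)$ entering the final bound but not the structure of the argument.
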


Finally, \eqref{eq.bractketingEntropyModelSm} is proved via \cref{Extend.montuelle.lemma4new,lem.bracketingEntropyGaussianBlock}.
Indeed, with the fact that $\dim(S_\bfm) =\dim(\cW_K) + \dim\left(\gkdb\right)$, it follows
\begin{align*}
	&\cH_{[\cdot],\thell}\left(\delta,S_\bfm\right)\nn\\
	 &\le \cH_{[\cdot],\sup_\yv d_k}\left(\frac{\delta}{2},\cP_K\right) + \cH_{[\cdot],d_{\gkdb} }\left(\frac{\delta}{2},\gkdb\right)
	\nn\\
	&\le \dim\left(\cW_K\right) \left(C_\cW + \ln \left(\frac{8\sqrt{K}}{3 \sqrt{3}\delta}\right)\right) + \dim\left(\gkdb\right) \left(C_{\gkdb} + \ln \left(\frac{1}{\delta}\right)\right)\nn\\
	%
	%
	&= \dim\left(S_\bfm\right) \left[\frac{\dim\left(\cW_K\right)}{\dim\left(S_\bfm\right)} \left(C_\cW + \ln \left(\frac{8\sqrt{K}}{3 \sqrt{3}} \right)+\ln \left(\frac{1}{\delta}\right)\right) +  \frac{\dim\left(\gkdb\right)}{\dim\left(S_\bfm\right)} \left(C_{\gkdb} + \ln \left(\frac{1}{\delta}\right)\right)\right]\nn\\
	%
	%
	&= \dim(S_\bfm)\left(C_\bfm + \ln \left(\frac{1}{\delta}\right)\right), \text{ where }\nn\\
	%
	C_\bfm &= \frac{\dim(\cW_K)}{\dim(S_\bfm)} \left(C_\cW + \ln \left(\frac{8\sqrt{K}}{3 \sqrt{3}}\right)\right)+ \frac{\dim\left(\gkdb\right) C_{\gkdb}}{\dim\left(S_\bfm\right)}
	\nn\\&
	\le C_\cW + \ln \left(\frac{8\sqrt{K_{\max}}}{3\sqrt{3}}\right) + C_{\gkdb} :=\fC.
\end{align*}
It is interesting that the constant $\fC$ does not depend on the dimension $\dim\left(S_\bfm\right)$ of the model, thanks to the hypothesis that $C_\cW$ is common for every model $S_\bfm$ in the collection. Therefore, \cref{prop.DudleyClassicalEntropy} implies that, give $C = 2 \left(\sqrt{\fC} + \sqrt{\pi}\right)^2$, the model complexity $\cD_\bfm$ satisfies
\begin{align*}
	\cD_\bfm \equiv n \delta^2_\bfm \le \dim(S_\bfm) \left(2 \left(\sqrt{\fC} + \sqrt{\pi}\right)^2 + \left(\ln \frac{n}{\left(\sqrt{\fC} + \sqrt{\pi}\right)^2\dim\left(S_\bfm\right)}\right)_+\right)
	\le \dim(S_\bfm) \left(C+\ln n\right).
\end{align*}

To this end, \cref{thm_5_1_devijver2015finite} implies that when a collection of BLoMPE models $ \left(S_\bfm\right)_{\bfm \in \cM}$ with the penalty functions satisfies $\pen(\bfm) \geq \kappa \left[\dim(S_\bfm) \left(C+\ln n\right)+ (1 \vee \tau)\xi_\bfm\right]$ with $\kappa > \kappa_0$,
the oracle inequality in \cref{weakOracleInequality} holds.

\section*{Acknowledgments}
TrungTin Nguyen is supported by a ``Contrat doctoral'' from the French Ministry of Higher Education
and Research. Faicel Chamroukhi is granted by the French National Research Agency (ANR) grant \href{https://anr.fr/en/funded-projects-and-impact/funded-projects/project/funded/project/b2d9d3668f92a3b9fbbf7866072501ef-f004f5ad27/?tx_anrprojects\_funded\%5Bcontroller\%5D=Funded&cHash=895d4c3a16ad6a0902e6515eb65fba37}{SMILES ANR-18-CE40-0014}. Hien Duy Nguyen is funded by Australian Research Council grant number DP180101192. This research is funded directly by the Inria \href{https://team.inria.fr/statify/projects/lander/}{LANDER} project. TrungTin Nguyen also sincerely acknowledges Inria Grenoble-Rhône-Alpes Research Centre for a valuable Visiting PhD Fellowship working with \href{https://team.inria.fr/statify/}{STATIFY} team so that this research can be completed, 
Emilie Devijver for fruitful statistical discussions.


\appendix

\section*{Appendix}

\section{Lemma proofs}\label{proofLemma}

\subsection{Proof of Lemma \ref{lem_Inequality_Hellinger_Supx_dy_dk}} \label{sec.lem_Inequality_Hellinger_Supx_dy_dk}
We first aim to prove that $\thel(s,t) \le \sup_\yv d^2_\xv(s,t)$. Indeed, by definition, it follows that
\begin{align*}
	\thel\left(s,t\right) &= \E{\Yv}{\frac{1}{n} \sum_{i=1}^n d^2_\xv\left(s\left(\cdot|\Yv_i\right),t\left(\cdot|\Yv_i\right)\right)}
	=\frac{1}{n} \sum_{i=1}^n \E{\Yv}{d^2_\xv\left(s\left(\cdot|\Yv_i\right),t\left(\cdot|\Yv_i\right)\right)}\nn\\
	&=\frac{1}{n} \sum_{i=1}^n \int_{\cY}d^2_\xv\left(s\left(\cdot|\yv\right),t\left(\cdot|\yv\right)\right)s_{\xv,0}(\yv)d\yv \le  \sup_\yv d^2_\xv~(s,t) \frac{1}{n} \sum_{i=1}^n\int_{\cY}s_{\xv,0}(\yv)d\yv = \sup_\yv d^2_\xv~(s,t),
\end{align*}
where $s_{\xv,0}$ denotes that marginal PDF of $s_0$, \wrt~$\xv$.
Consequently, it holds that $\thell(s,t) = \sqrt{\thel(s,t)} \le \sqrt{\sup_\yv d^2_\xv(s,t)}= \sup_\yv d_\xv(s,t)$.
To prove that  $$\cH_{[\cdot],\thell}\left(\delta,S_\bfm\right) \le \cH_{[\cdot],\sup_\yv d_\xv }\left(\delta,S_\bfm\right),$$ 
it is sufficient to check that $$\cN_{[\cdot],\thell}\left(\delta,S_\bfm\right) \le \cN_{[\cdot],\sup_\yv d_\xv }\left(\delta,S_\bfm\right).$$
By using the definition of bracketing entropy in \eqref{eq_definition_BracketingEntropy} and $\thell(s,t) \le \sup_\yv d_\xv(s,t)$, given
\begin{align*}
	A &= \left\{n \in \Ns: \exists t^-_1,t^+_1,\ldots,t^-_n,t^+_n \text{ s.t }\sup_\yv d_\xv(s,t)\left(t^-_k,t^+_k\right) \le \delta,S_\bfm \subset \bigcup_{k=1}^n \left[t^-_k,t^+_k\right] \right\},\\
	B &=\left\{n\in \Ns: \exists t^-_1,t^+_1,\ldots,t^-_n,t^+_n \text{ s.t }\thell\left(t^-_k,t^+_k\right) \le \delta,S_\bfm \subset \bigcup_{k=1}^n \left[t^-_k,t^+_k\right] \right\},
\end{align*}
it leads to that $A \subset B$ and then \eqref{eq_Inequality_Hellinger_Supx_dy} follows, since
\begin{align*}
	\cN_{[\cdot],\sup_\yv d_\xv(s,t)}\left(\delta,S_\bfm\right)= \min A\ge \min B =\cN_{[\cdot],\thell}\left(\delta,S_\bfm\right).
\end{align*}

With the similar argument as in the proof of \eqref{eq_Inequality_Hellinger_Supx_dy}, it holds that $d_{\cP_K}\left(g^+,g^-\right) \le \sup_\yv d_k(g^+,g^-)$ and \eqref{eq_Inequality_Hellinger_Supx_dk} is proved.
\subsection{Proof of Lemma \ref{lem.bracketingEntropyGaussianBlock}}\label{proof.lem.bracketingEntropyGaussianBlock}
It is worth mentioning that without any structures on covariance matrices of Gaussian experts from the collection $\cM$, Lemma \ref{lem.bracketingEntropyGaussianBlock} can be proved using Proposition 2 from \cite{montuelle2014mixture} and \citet[Appendix B.2.3]{montuelle2014mixture}, for constructing of Gaussian brackets to deal with the Gaussian experts. However, dealing with block-diagonal covariance matrices with random subcollection is much more challenging. We have to establish more constructive bracketing entropies in the spirits of \cite{maugis2011non,devijver2015finite,devijver2018block}. 

Given any $k \in [K]$, by defining
\begin{align} \label{eq_define_GaussianExpert_1D}
	\gdbk &= \left\{\cX \times\cY \ni \left(\xv,\yv\right)\mapsto \Phi\left(\xv;\upsilonb_{k,d}(\yv),\Sigmab_k\left(\bfB_k\right)\right)=:\Phi_k: \upsilonb_{k,d} \in \Upsilondk, \Sigmab_k\left(\bfB_k\right) \in \bfV_k(\bfB_k) \right\},
\end{align}
it follows that $\gkdb = \prod_{k=1}^K \gdbk$, where $\prod$ stands for the cartesian product. By using \cref{lem_HGK_HG}, which is proved in \cref{sec.prooflem_HGK_HG}, it follows that
\begin{align} \label{eq_HGK_HG}
	\cH_{[\cdot],d_{\gkdb} }\left(\frac{\delta}{2},\gkdb\right) \le \sum_{k=1}^K \cH_{[\cdot],d_{\gdbk} }\left(\frac{\delta}{2\sqrt{K}},\gdbk\right).
\end{align}
\begin{lem} \label{lem_HGK_HG}
	Given $\gkdb = \prod_{k=1}^K \gdbk$, where $\gdbk$ is defined in \eqref{eq_define_GaussianExpert_1D}, it holds that
	\begin{align*}
		\cN_{[\cdot],d_{\gkdb}}\left(\frac{\delta}{2},\gkdb\right) \le \prod_{k=1}^K\cN_{[\cdot],d_{\gdbk}}\left(\frac{\delta}{2\sqrt{K}},\gdbk\right),
	\end{align*}
	where for any $\Phi^+,\Phi^- \in \gkdb$ and any $\Phi_k^+,\Phi_k^- \in \gdbk,k\in[K]$,
	\begin{align*}
		d^2_{\gkdb}\left(\Phi^+,\Phi^-\right) &= \E{\Yv}{\frac{1}{n}\sum_{i=1}^n \sum_{k=1}^K \hel\left(\Phi^+_k \left(\cdot,\Yv_i\right),\Phi^-_k\left(\cdot,\Yv_i\right)\right)},\nn\\
		d^2_{\gdbk}\left(\Phi_k^+,\Phi_k^-\right) &= \E{\Yv}{\frac{1}{n}\sum_{i=1}^n  \hel\left(\Phi_k^+ \left(\cdot,\Yv_i\right),\Phi_k^-\left(\cdot,\Yv_i\right)\right)}
		.
	\end{align*}
\end{lem}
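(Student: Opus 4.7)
The proof is a standard product-bracketing argument, exploiting the additive structure of $d^2_{\gkdb}$ over the $K$ components. The plan is to take near-optimal bracketings of each factor $\gdbk$ with radius $\delta/(2\sqrt{K})$, form their Cartesian product, and verify both the covering property and the $d_{\gkdb}$-diameter bound.

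First, for every $k \in [K]$, choose a minimal bracketing $\left\{\bigl[\Phi_{k,j_k}^-,\Phi_{k,j_k}^+\bigr]\right\}_{j_k \in J_k}$ of $\gdbk$ of $d_{\gdbk}$-diameter at most $\delta/(2\sqrt{K})$, whose cardinality equals $\cN_{[\cdot],d_{\gdbk}}\!\left(\delta/(2\sqrt{K}),\gdbk\right)$. For each multi-index $(j_1,\dots,j_K)\in \prod_k J_k$ I then construct the product bracket
\begin{align*}
    \Phi^-_{(j_1,\ldots,j_K)}=\bigl(\Phi^-_{k,j_k}\bigr)_{k\in[K]},\qquad \Phi^+_{(j_1,\ldots,j_K)}=\bigl(\Phi^+_{k,j_k}\bigr)_{k\in[K]},
\end{align*}
interpreted componentwise: a $K$-tuple $\Phi=(\Phi_k)_{k\in[K]}\in \gkdb$ lies in $[\Phi^-_{(j_1,\ldots,j_K)},\Phi^+_{(j_1,\ldots,j_K)}]$ iff $\Phi_{k,j_k}^-(\xv,\yv)\le \Phi_k(\xv,\yv)\le \Phi_{k,j_k}^+(\xv,\yv)$ for all $k$ and all $(\xv,\yv)$. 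This is the natural bracket notion on a Cartesian product of function classes.

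Next, I verify the covering property: given any $\Phi=(\Phi_k)_{k\in[K]}\in \gkdb$, each $\Phi_k$ belongs to some bracket indexed by $j_k(\Phi_k)\in J_k$ from the $k$-th bracketing, so $\Phi$ lies in the product bracket indexed by $(j_1(\Phi_1),\dots,j_K(\Phi_K))$. Hence the collection of product brackets covers $\gkdb$, and its cardinality equals $\prod_{k=1}^K \cN_{[\cdot],d_{\gdbk}}\!\left(\delta/(2\sqrt{K}),\gdbk\right)$.

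Finally, I control the diameter. Using the definition of $d^2_{\gkdb}$ from \cref{lem_bracketingEntropyDecomposition2} and the identity $d^2_{\gkdb}(\Phi^+,\Phi^-)=\sum_{k=1}^K d^2_{\gdbk}(\Phi_k^+,\Phi_k^-)$, which follows by linearity of the expectation and the sum over $k$ in the definition, I obtain
\begin{align*}
    d^2_{\gkdb}\bigl(\Phi^+_{(j_1,\ldots,j_K)},\Phi^-_{(j_1,\ldots,j_K)}\bigr)=\sum_{k=1}^K d^2_{\gdbk}\bigl(\Phi_{k,j_k}^+,\Phi_{k,j_k}^-\bigr)\le \sum_{k=1}^K \frac{\delta^2}{4K}=\frac{\delta^2}{4},
\end{align*}
so $d_{\gkdb}\bigl(\Phi^+_{(j_1,\ldots,j_K)},\Phi^-_{(j_1,\ldots,j_K)}\bigr)\le \delta/2$, as required. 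Combining the covering and diameter bounds yields the claim. There is no real obstacle here; the only point to check carefully is the additive decomposition of $d^2_{\gkdb}$ over the $K$ factors, which is immediate from the definition, and the fact that the sum-of-squares radii combine to $\delta/2$ thanks to the scaling factor $1/\sqrt{K}$ chosen in each factor.
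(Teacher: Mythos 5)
Your proof is correct and takes essentially the same route as the paper's: both form the Cartesian product of minimal $\delta/(2\sqrt{K})$-bracketings of the factors $\gdbk$, check the covering property componentwise, and use the additive decomposition $d^2_{\gkdb}=\sum_{k=1}^K d^2_{\gdbk}$ to get the combined radius $\delta/2$. Your multi-index bookkeeping is in fact slightly cleaner than the paper's single-index notation, but the argument is the same.
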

\cref{lem.bracketingEntropyGaussianBlock} is proved via \eqref{eq_HGK_HG} and \cref{lem.bracketingEntropyGaussianBlock1-D}, which is proved in \cref{sec.lem.bracketingEntropyGaussianBlock1-D}.
\begin{lem}\label{lem.bracketingEntropyGaussianBlock1-D}
	By defining $\gdbk$ as in \eqref{eq_define_GaussianExpert_1D}, for all $\delta \in (0,\sqrt{2}]$, it holds that
	\begin{align}
		\cH_{[\cdot],d_{\gdbk} }\left(\frac{\delta}{2},\gdbk\right) &\le \dim\left(\gdbk\right) \left(C_{\gdbk} + \ln \left(\frac{1}{\delta}\right)\right),\label{eq_bracketingEntropyGaussianBlock1_D} \text{ where }\\
		D_{\bfB_k} &=  \sum_{g=1}^{G_k} \frac{\card\left(d_k^{[g]}\right)\left(\card\left(d_k^{[g]}\right)-1\right)}{2},\nn\\
		C_{\gdbk} &= \frac{D_{\bfB_k} \ln \left(\frac{6 \sqrt{6} \lambda_M D^2\left(D-1\right)}{ \lambda_m D_{\bfB_k}}\right) + \dim\left(\Upsilondk\right) \ln\left(\frac{6 \sqrt{2D}\exp\left(C_{\Upsilondk}\right)}{ \sqrt{\lambda_m}}\right)}{\dim\left(\gdbk\right)}\nn.
	\end{align}
\end{lem}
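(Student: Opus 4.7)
The aim is to show that the bracketing entropy of a single Gaussian-expert class $\gdbk$ --- polynomial mean in $\Upsilondk$ together with block-diagonal covariance in $\bfV_k(\bfB_k)$ --- grows as $\dim(\gdbk)\,(C_{\gdbk}+\ln(1/\delta))$ in the tensorized Hellinger metric $d_{\gdbk}$. My first step would be to replace $d_{\gdbk}$ by the stronger pointwise sup-Hellinger distance $\sup_\yv d_\xv$, since the same averaging inequality used in \cref{lem_Inequality_Hellinger_Supx_dy_dk} yields $d_{\gdbk}^2(\Phi^+,\Phi^-)\le\sup_\yv d_\xv^2(\Phi^+(\cdot,\yv),\Phi^-(\cdot,\yv))$; bracketing entropy is monotone in the metric, so it suffices to control $\cH_{[\cdot],\sup_\yv d_\xv}(\delta/2,\gdbk)$.

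The remainder is a product-net Gaussian-bracket argument in the spirit of \cite{maugis2011non,devijver2015finite,devijver2018block}. For the mean parameter, the polynomial structure in \eqref{eq_define_meanExperts_polynomial} together with the boundedness of $\cY=[0,1]^L$ and the coefficient bound $\|\alphab\|_\infty\le T_\Upsilonb$ implies that a coordinate-wise $\eta_1$-mesh on the polynomial coefficients yields $\|\upsilonb_{k,d}^+(\yv)-\upsilonb_{k,d}^-(\yv)\|_\infty\lesssim e^{C_{\Upsilondk}}\eta_1$ uniformly in $\yv$, producing an $\eta_1$-net for $\Upsilondk$ of log-cardinality $\dim(\Upsilondk)\,\ln(\mathrm{const}/\eta_1)$. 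For the covariance, I would follow \cite{devijver2018block} and cover block by block: because $\Sigmab_k(\bfB_k)$ decomposes into independent SPD blocks of sizes $\card(d_k^{[g]})$ with eigenvalues in $[\lambda_m,\lambda_M]$, a standard covering of each such block (e.g.\ in operator norm) contributes $\card(d_k^{[g]})(\card(d_k^{[g]})+1)/2$ logarithmic factors each scaling like $\ln(\lambda_M/(\lambda_m\eta_2))$; summing across blocks yields precisely the $D_{\bfB_k}\,\ln(\lambda_M D^2/(\lambda_m D_{\bfB_k}\eta_2))$ contribution appearing in $C_{\gdbk}$.

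Given a pair $(\upsilonb^j,\Sigmab^l)$ from these two nets, I would glue them into Gaussian brackets via the Maugis--Michel inflation-deflation construction: set
\begin{equation*}
u^{\pm}(\xv,\yv)=(1\pm\eta)^{\pm D/2}\,\Phi_D\!\left(\xv;\upsilonb^{j,\pm}(\yv),(1\pm\eta)^{\pm 1}\Sigmab^l\right),
\end{equation*}
with small mean shifts $\upsilonb^{j,\pm}$ absorbing the $\eta_1$-discretization error and guaranteeing the pointwise domination $u^-\le \Phi_D(\xv;\upsilonb_{k,d}(\yv),\Sigmab_k(\bfB_k))\le u^+$ for every $(\upsilonb_{k,d},\Sigmab_k(\bfB_k))$ in the corresponding cell of the product net. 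Using the explicit Hellinger distance between two Gaussians --- equivalently, a standard bound in terms of relative mean shift and relative covariance perturbation --- the sup-Hellinger diameter of this bracket is controlled by a constant multiple of $\eta+\eta_1/\sqrt{\lambda_m}+\eta_2/\lambda_m$. Calibrating $\eta,\eta_1,\eta_2$ proportional to $\delta$ with the precise constants dictated by the statement of $C_{\gdbk}$ makes this diameter $\le\delta/2$, while the bracket count is the product of the two net cardinalities, yielding the announced bound after collecting the $\ln(1/\delta)$ terms and factoring $\dim(\gdbk)$.

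The main obstacle is executing the block-diagonal covering cleanly, so that only the $D+D_{\bfB_k}$ effective parameters of $\Sigmab_k(\bfB_k)$ enter the entropy count, rather than the $D(D+1)/2$ of a dense SPD matrix; this is exactly where the block structure of \cite{devijver2018block} must be invoked, and where the permutation $\bfP_k$ should be treated as fixed inside a single model index $\bfm=(K,d,\bfB)$ so that no additional entropy is spent on combinatorial choices. A secondary bookkeeping issue is matching the explicit constants $6\sqrt{6}$ and $6\sqrt{2D}$ appearing in $C_{\gdbk}$, which trace back to the Hellinger formula for Gaussians and to the SPD covering constants; these affect calibration but not the qualitative shape of the bound.
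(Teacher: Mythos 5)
Your proposal follows essentially the same route as the paper: a product net over the mean coefficients (via the covering bound of \cite{montuelle2014mixture}) and over the block-diagonal covariances (via the discretization of \cite{devijver2018block}), glued into Maugis--Michel inflation--deflation Gaussian brackets whose Hellinger diameter is calibrated to $\delta/2$, with the tensorized distance $d_{\gdbk}$ dominated by the pointwise-in-$\yv$ Hellinger distance. The only slip is in the covariance bookkeeping: you state that each block contributes $\card(d_k^{[g]})(\card(d_k^{[g]})+1)/2$ logarithmic factors, which is inconsistent with the target count $D_{\bfB_k}=\sum_{g}\card(d_k^{[g]})(\card(d_k^{[g]})-1)/2$ (off-diagonal entries only, as in the adjacency-matrix discretization of \cite{devijver2018block}); the paper sidesteps this by invoking that discretization directly, and it also keeps a common, unshifted discretized mean in both bracket endpoints, absorbing the mean error into the multiplicative constant via the Gaussian ratio bound of \cref{lem.ratioGaussian} rather than through mean shifts.
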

Indeed, \eqref{eq_HGK_HG} and \eqref{eq_bracketingEntropyGaussianBlock1_D} lead to
\begin{align*}
	\cH_{[\cdot],d_{\gkdb} }\left(\frac{\delta}{2},\gkdb\right) &\le \sum_{k=1}^K \cH_{[\cdot],d_{\gdbk} }\left(\frac{\delta}{2\sqrt{K}},\gdbk\right)\nn\\
	&\le  \sum_{k=1}^K  \dim\left(\gdbk\right)\left(C_{\gdbk} + \ln \left(\sqrt{K}\right) + \ln \left(\frac{1}{\delta}\right)\right)\\
	& \le \dim\left(\gkdb\right) \left(C_{\gkdb} + \ln \left(\frac{1}{\delta}\right)\right).
\end{align*}
Here, $C_{\gkdb} = \sum_{k=1}^K C_{\gdbk} + \ln \left(\sqrt{K}\right)$ and note that $\dim\left(\gkdb\right) = \sum_{k=1}^K  \dim\left(\gdbk\right)$, $\dim\left(\gdbk\right) = D_{\bfB_k}+\dim\left(\Upsilondk\right)$, $\dim\left(\Upsilondk\right) = D d_{\Upsilondk}, C_{\Upsilondk}=\sqrt{D}d_{\Upsilondk}T_{\Upsilondk}$ (in cases where linear combination of bounded functions are used for means, \ie $\Upsilondk = \Upsilonb_b$) or $\dim\left(\Upsilondk\right) = D \binom{d_{\Upsilondk} + L}{ L}$, $C_{\Upsilondk} = \sqrt{D} \binom{d_{\Upsilondk} + L}{L} T_{\Upsilondk}$ (in cases where we use polynomial means, \ie $\Upsilondk = \Upsilonb_p$).

\subsubsection{Proof of Lemma \ref{lem_HGK_HG}}\label{sec.prooflem_HGK_HG}
By the definition of the bracketing entropy in \eqref{eq_definition_BracketingEntropy}, for each $k\in[K]$, let $\left\{\left[\Phi^{l,-}_k,\Phi^{l,+}_k\right]\right\}_{ 1 \le  l \le  \cN_{\gdbk}}$ be a minimal covering of $\delta_k$ brackets for $d_{\gdbk}$ of $\gdbk$, with cardinality $
\cN_{\gdbk}$. This leads to
\begin{align*}
	\forall l \in \left[\cN_{\gdbk}\right], d_{\gdbk}\left(\Phi^{l,-}_k,\Phi^{l,+}_k\right) \le  \delta_k.
\end{align*} 
Therefore, we claim that the set $\left\{\prod_{k=1}^K\left[\Phi^{l,-}_k,\Phi^{l,+}_k\right]\right\}_{ 1 \le  l \le  \cN_{\gdbk}}$ is a covering of $\frac{\delta}{2}$-bracket for $d_{\gkdb}$ of $\gkdb$ with cardinality $\prod_{k=1}^K \cN_{[\cdot], d_{\gdbk} }\left(\delta_k,\gdbk\right)$.
Indeed, let any $\Phi = \left(\Phi_k\right)_{k\in[K]} \in \gkdb$. Consequently, for each $k\in[K], \Phi_k \in \gdbk$, there exists $l(k)\in\left[ \cN_{\gdbk}\right]$, such that
\begin{align*}
	\Phi^{l(k),-}_k\le  \Phi_k \le \Phi^{l(k),+}_k,
	d^2_{\gdbk}\left(\Phi^{l(k),+}_k,\Phi^{l(k),-}_k\right) \le \left(\delta_k\right)^2.
\end{align*}
Then, it follows that $\Phi \in \left[\Phi^{-},\Phi^{+}\right] \in \left\{\prod_{k=1}^K\left[\Phi^{l,-}_k,\Phi^{l,+}_k\right]\right\}_{ 1 \le  l \le  \cN_{\gdbk}}$, with $\Phi^{-} = \left(\Phi^{l(k),-}_k\right)_{k\in[K]},\Phi^{+} = \left(\Phi^{l(k),+}_k\right)_{k\in[K]}$, which implies that  $\left\{\prod_{k=1}^K\left[\Phi^{l,-}_k,\Phi^{l,+}_k\right]\right\}_{ 1 \le  l \le  \cN_{\gdbk}}$ is a bracket covering of $\gkdb$.

Now, we want to verify that the size of this bracket is $\delta/2$ by choosing $\delta_k = \frac{\delta}{2\sqrt{K}}, \forall k \in [K]$. It follows that 
\begin{align*}
	d^2_{\gkdb}\left(\Phi^{-},\Phi^{+}\right) &= \E{\Yv}{\frac{1}{n}\sum_{i=1}^n \sum_{k=1}^K \hel\left(\Phi^{l(k),-}_k \left(\cdot,\Yv_i\right),\Phi^{l(k),+}_k\left(\cdot,\Yv_i\right)\right)}\nn\\
	&= \sum_{k=1}^K\E{\Yv}{\frac{1}{n}\sum_{i=1}^n  \hel\left(\Phi^{l(k),-}_k \left(\cdot,\Yv_i\right),\Phi^{l(k),+}_k\left(\cdot,\Yv_i\right)\right)} \nn\\
	&= \sum_{k=1}^K d^2_{\gdbk}\left(\Phi^{l(k),-}_k ,\Phi^{l(k),+}_k\right)
	\le K \left(\frac{\delta}{2\sqrt{K}}\right)^2 = \left(\frac{\delta}{2}\right)^2.
\end{align*}
To this end, by definition of a minimal $\frac{\delta}{2}$-bracket covering number for $\gkdb$, \cref{lem_HGK_HG} is proved.
%

\subsubsection{Proof of Lemma \ref{lem.bracketingEntropyGaussianBlock1-D}}\label{sec.lem.bracketingEntropyGaussianBlock1-D}

To provide the upper bound of
the bracketing entropy in  \eqref{eq_bracketingEntropyGaussianBlock1_D}, our technique is adapted from the work of \cite{genovese2000rates} for unidimensional Gaussian mixture families, which is then generalized to multidimensional case by \cite{maugis2011non}. Furthermore, we make use of the results from \cite{devijver2018block} to deal with block-diagonal covariance matrices, $\bfV_k\left(\bfB_k\right),k\in[K]$, and from \cite{montuelle2014mixture} to handle the means of Gaussian experts $\Upsilondk,k\in[K]$.
The main idea is to define firstly a net over the parameter spaces of Gaussian experts, $\Upsilondk\times \bfV_k\left(\bfB_k\right), k\in[K]$, and to construct a bracket covering of $\gdbk$ according to the tensorized Hellinger distance. 
Note that $\dim\left(\gdbk\right)=\dim\left(\Upsilondk\right)+\dim\left(\bfV_k\left(\bfB_k\right)\right)$. 

\paragraph{Step 1: Construction of a net for the block-diagonal covariance matrices.}
Firstly, for $k\in[K]$, we denote by $\adj\left(\Sigmab_k\left(\bfB_k\right)\right)$ the adjacency matrix associated to the covariance matrix $\Sigmab_k\left(\bfB_k\right)$. Note that this matrix of size $D^2$ can be defined by a vector of concatenated upper triangular vectors. We are going to make use of the result from \cite{devijver2018block} to handle the block-diagonal covariance matrices $\Sigmab_k\left(\bfB_k\right)$, via its corresponding adjacency matrix. To do this, we need to construct a discrete space for $\left\{0,1\right\}^{D(D-1)/2}$, which is a one-to-one correspondence (bijection) with
$$\cA_{\bfB_k} = \left\{\bfA_{\bfB_k} \in \cS_D\left(\left\{0,1\right\}\right):\exists \Sigmab_k\left(\bfB_k\right) \in \bfV_k\left(\bfB_k\right) \text{ s.t } \adj\left(\Sigmab_k\left(\bfB_k\right)\right) = \bfA_{\bfB_k} \right\},$$
where $\cS_D\left(\left\{0,1\right\}\right)$ is the set of symmetric matrices of size $D$ taking values on $\left\{0,1\right\}$. 

Then, we want to deduce a discretization of the set of covariance matrices. Let $h$ denotes Hamming distance on $\left\{0,1\right\}^{D(D-1)/2}$ defined by
$$d(z,z') = \sum_{i=1}^n \Indi\left\{z \neq z'\right\}, \text{ for all } z,z' \in \left\{0,1\right\}^{D(D-1)/2}.$$
Let $\left\{0,1\right\}_{\bfB_k}^{D(D-1)/2}$ be the subset of $\left\{0,1\right\}^{D(D-1)/2}$ of vectors for which the corresponding graph has structure $\bfB_k = \left(d^{[g]}_k\right)_{g\in\left[G_k\right]}$. Corollary 1 and Proposition 2 from Supplementary Material A of \cite{devijver2018block} imply that there exists some subset $\cR$ of $\left\{0,1\right\}^{D(D-1)/2}$, as well as its equivalent $\cA^{\disc}_{\bfB_k}$ for adjacency matrices such that, given $\epsilon > 0$, and
\begin{align*}
	{\tilde{S}}^{\disc}_{\bfB_k}(\epsilon) = \left\{\Sigmab_k\left(\bfB_k\right) \in \cS_D^{++}(\mathbb{R}): \adj\left(\Sigmab_k\left(\bfB_k\right)\right) \in \cA^{\disc}_{\bfB_k}, \left[\Sigmab_k\left(\bfB_k\right)\right]_{i,j} = \sigma_{i,j}\epsilon,\sigma_{i,j} \in \left[\frac{-\lambda_M}{\epsilon},\frac{\lambda_M}{\epsilon}\right]\bigcap \Z \right\},
\end{align*} 
it holds that 
\begin{align}
	\left\|\Sigmab_k\left(\bfB_k\right)-\widetilde{\Sigmab}_k\left(\bfB_k\right)\right\|_2^2 &\le \frac{D_{\bfB_k}}{2} \wedge \epsilon^2,\forall \left(\Sigmab_k\left(\bfB_k\right),\widetilde{\Sigmab}_k\left(\bfB_k\right) \right)\in \left({\tilde{S}}^{\disc}_{\bfB_k}(\epsilon)\right)^2 \text{\st} \Sigmab_k\left(\bfB_k\right) \neq \widetilde{\Sigmab}_k\left(\bfB_k\right),\nn\\
	\card\left({\tilde{S}}^{\disc}_{\bfB_k}(\epsilon)\right) & \le  \left(\Bigg\lfloor \frac{2\lambda_M}{\epsilon}\Bigg\rfloor\frac{D\left(D-1\right)}{2 D_{\bfB_k}}\right)^{D_{\bfB_k}},\\
	D_{\bfB_k} &=\dim\left(\bfV_k\left(\bfB_k\right)\right)= \sum_{g=1}^{G_k} \frac{\card\left(d_k^{[g]}\right)\left(\card\left(d_k^{[g]}\right)-1\right)}{2}. \label{eq.upperCardCovariance}
\end{align}
By choosing $\epsilon^2 \le \frac{D_{\bfB_k}}{2}$, given $\Sigmab_k\left(\bfB_k\right) \in \bfV_k\left(\bfB_k\right)$, then there exists $\widetilde{\Sigmab}_k\left(\bfB_k\right) \in {\tilde{S}}^{\disc}_{\bfB_k}(\epsilon)$, such that 
\begin{align}
	\left\|\Sigmab_k\left(\bfB_k\right)-\widetilde{\Sigmab}_k\left(\bfB_k\right)\right\|_2^2 \le \epsilon^2. \label{eq.netsCovarianceG1}
\end{align}

\paragraph{Step 2: Construction of a net for the mean functions.}
Based on $\widetilde{\Sigmab}_k\left(\bfB_k\right)$, we can construct the following bracket covering of $\gdbk$ by defining the nets for the means of Gaussian experts.
The proof of Lemma 1, page 1693, from \cite{montuelle2014mixture} implies that
\begin{align*}
	\cN_{\left[\cdot\right],\sup_\yv \left\|\cdot\right\|_2}\left(\delta_{\Upsilondk},\Upsilondk\right) \le \left(\frac{\exp\left(C_{\Upsilondk}\right)}{\delta_{\Upsilondk}}\right)^{\dim\left(\Upsilondk\right)}.
\end{align*} 
Here $\dim\left(\Upsilondk\right) = D d_{\Upsilondk}$, and $C_{\Upsilondk}=\sqrt{D}d_{\Upsilondk}T_{\Upsilondk}$ in the general case or
$\dim\left(\Upsilondk\right) = D \binom{d_{\Upsilondk} + L}{ L}$, and $C_{\Upsilondk} = \sqrt{D} \binom{d_{\Upsilondk} + L}{L} T_{\Upsilondk}$ in the special case of polynomial means. Then, by the definition of bracketing entropy in \eqref{eq_definition_BracketingEntropy}, for any minimal $\delta_{\Upsilondk}$-bracketing covering of the means from Gaussian experts, denoted by $G_{\Upsilondk}\left(\delta_{\Upsilondk}\right)$, it is true that
\begin{align}
	\card\left(G_{\Upsilondk}\left(\delta_{\Upsilondk}\right)\right) \le \left(\frac{\exp\left(C_{\Upsilondk}\right)}{\delta_{\Upsilondk}}\right)^{\dim\left(\Upsilondk\right)}.\label{eq.cardMeanGaussianEx}
\end{align}
Therefore, given $\alpha >0$, which is specified later, we claim that the set
\begin{align*}
	\left\{\left[l,u\right]\left| 
	\begin{array}{l} 
		l(\xv,\yv) = \left(1+2\alpha\right)^{-D} \Phi\left(\xv;\widetilde{\upsilonb}_{k,d}(\yv),\left(1+\alpha\right)^{-1}\widetilde{\Sigmab}_k\left(\bfB_k\right)\right), \\
		u(\xv,\yv) = \left(1+2\alpha\right)^{D} \Phi\left(\xv;\widetilde{\upsilonb}_{k,d}(\yv),\left(1+\alpha\right)\widetilde{\Sigmab}_k\left(\bfB_k\right)\right), \\
		\widetilde{\upsilonb}_{k,d} \in G_{\Upsilondk}\left(\delta_{\Upsilondk}\right),\widetilde{\Sigmab}_k\left(\bfB_k\right) \in {\tilde{S}}^{\disc}_{\bfB_k}(\epsilon)
	\end{array}
	\right.\right\},
\end{align*}
is a $\delta_{\Upsilondk}$-brackets set over $\gdbk$. Indeed, let $\cX\times\cY \ni  (\xv,\yv) \mapsto f(\xv,\yv) = \Phi\left(\xv;\upsilonb_{k,d}(\yv),\Sigmab_k\left(\bfB_k\right)\right)$ be a function of $\gdbk$, where $\upsilonb_{k,d} \in \Upsilondk$ and $\Sigmab_k\left(\bfB_k\right) \in \bfV_k\left(\bfB_k\right)$. According to \eqref{eq.netsCovarianceG1}, there exists $\widetilde{\Sigmab}_k\left(\bfB_k\right) \in {\tilde{S}}^{\disc}_{\bfB_k}(\epsilon)$, such that 
\begin{align*}
	\left\|\Sigmab_k\left(\bfB_k\right)-\widetilde{\Sigmab}_k\left(\bfB_k\right)\right\|_2^2 \le \epsilon^2.
\end{align*}
By definition of $G_{\Upsilondk}\left(\delta_{\Upsilondk}\right)$, there exists $\widetilde{\upsilonb}_{k,d} \in G_{\Upsilondk}\left(\delta_{\Upsilondk}\right)$, such that 
\begin{align}
	\sup_{\yv \in \cY} \left\|\widetilde{\upsilonb}_{k,d}(\yv)-\upsilonb_{k,d}(\yv)\right\|_2^2 \le \delta^2_{\Upsilondk}. \label{eq.netMeanGaussian}
\end{align}

\paragraph{Step 3: Upper bound of the number of the bracketing entropy.}
Next, we wish to make use of \cref{lem.ratioGaussian} to evaluate the ratio of two Gaussian densities.
\begin{lem}[Proposition C.1 from \cite{maugis2011non}]\label{lem.ratioGaussian}
	Let $\Phi\left(\cdot;\mub_1,\Sigmab_1\right)$ and $\Phi\left(\cdot;\mub_2,\Sigmab_{2}\right)$ be two Gaussian densities. If $\Sigmab_{2}-\Sigmab_1$ is a positive definite matrix then for all $\xv \in \R^D$, 
	\begin{align*}
		\frac{\Phi\left(\xv;\mub_1,\Sigmab_1\right)}{\Phi\left(\xv;\mub_2,\Sigmab_{2}\right)} \le \sqrt{\frac{\left|\Sigmab_{2}\right|}{\left|\Sigmab_1\right|}} \exp\left[\frac{1}{2}\left(\mub_1-\mub_2\right)^\top \left(\Sigmab_{2}-\Sigmab_1\right)^{-1}\left(\mub_1-\mub_2\right)\right].
	\end{align*}
\end{lem}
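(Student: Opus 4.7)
The plan is to reduce the inequality to a pointwise bound on the exponent of the ratio, and then to obtain that bound by maximizing an explicit strictly concave quadratic form over $\R^D$. Writing out the two Gaussian densities, the ratio factors as
\[
\frac{\Phi(\xv;\mub_1,\Sigmab_1)}{\Phi(\xv;\mub_2,\Sigmab_2)} = \sqrt{\frac{\lvert\Sigmab_2\rvert}{\lvert\Sigmab_1\rvert}}\,\exp\!\left[\tfrac{1}{2}Q(\xv)\right],\qquad Q(\xv):=(\xv-\mub_2)^\top\Sigmab_2^{-1}(\xv-\mub_2)-(\xv-\mub_1)^\top\Sigmab_1^{-1}(\xv-\mub_1).
\]
It thus suffices to prove $\sup_{\xv\in\R^D} Q(\xv)\le(\mub_1-\mub_2)^\top(\Sigmab_2-\Sigmab_1)^{-1}(\mub_1-\mub_2)$, since the square-root prefactor matches exactly the prefactor on the right-hand side of the claim.

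Next, I would establish strict concavity of $Q$. Its Hessian is $2(\Sigmab_2^{-1}-\Sigmab_1^{-1})$, and the algebraic identity $\Sigmab_1^{-1}-\Sigmab_2^{-1}=\Sigmab_1^{-1}(\Sigmab_2-\Sigmab_1)\Sigmab_2^{-1}$ combined with the hypothesis $\Sigmab_2-\Sigmab_1\succ 0$ (and positive definiteness of $\Sigmab_1,\Sigmab_2$) yields $\Sigmab_1^{-1}\succ\Sigmab_2^{-1}$, so the Hessian is negative definite. In particular $Q$ is coercive from above, the supremum is attained at a unique critical point, and $(\Sigmab_2-\Sigmab_1)^{-1}$ on the right-hand side is well defined.

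I would then compute the maximizer. Setting $\nabla Q(\xv)=0$ gives $\Sigmab_2^{-1}(\xv-\mub_2)=\Sigmab_1^{-1}(\xv-\mub_1)$; denote the common value by $\bfv$. Then $\xv-\mub_1=\Sigmab_1\bfv$ and $\xv-\mub_2=\Sigmab_2\bfv$, and subtracting the two relations gives $\mub_1-\mub_2=(\Sigmab_2-\Sigmab_1)\bfv$, so $\bfv=(\Sigmab_2-\Sigmab_1)^{-1}(\mub_1-\mub_2)$. Substituting back,
\[
Q(\xv^\star)=(\Sigmab_2\bfv)^\top\Sigmab_2^{-1}(\Sigmab_2\bfv)-(\Sigmab_1\bfv)^\top\Sigmab_1^{-1}(\Sigmab_1\bfv)=\bfv^\top(\Sigmab_2-\Sigmab_1)\bfv=(\mub_1-\mub_2)^\top(\Sigmab_2-\Sigmab_1)^{-1}(\mub_1-\mub_2),
\]
which is the desired bound. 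Combining with the factored form of the ratio yields the stated inequality.

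The only mildly delicate step is the monotonicity of matrix inversion on the positive definite cone (needed for strict concavity and for inverting $\Sigmab_2-\Sigmab_1$); everything else is quadratic-form bookkeeping, so I expect no real obstacle. A slicker but essentially equivalent route would be to introduce the $(\Sigmab_2-\Sigmab_1)$-inner product and apply Cauchy--Schwarz, but the direct maximization argument sketched above keeps the computation transparent and matches the usual presentation of this classical lemma.
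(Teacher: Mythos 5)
Your proof is correct: the factorization of the ratio, the strict concavity of $Q$ via the identity $\Sigmab_1^{-1}-\Sigmab_2^{-1}=\Sigmab_1^{-1}\left(\Sigmab_2-\Sigmab_1\right)\Sigmab_2^{-1}$ together with operator antitonicity of inversion, and the computation of the maximizer all check out, and the value $Q(\xv^\star)=(\mub_1-\mub_2)^\top(\Sigmab_2-\Sigmab_1)^{-1}(\mub_1-\mub_2)$ is exactly what is needed. Note that the paper does not prove this lemma at all --- it is imported verbatim as Proposition C.1 of \cite{maugis2011non} --- and your direct maximization of the quadratic exponent is the standard argument given there, so there is nothing to reconcile.
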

The following \cref{lem.assumptionPDM} allows us to fulfill the assumptions of \cref{lem.ratioGaussian}.
\begin{lem}[Similar to Lemma B.8 from \cite{maugis2011non}]\label{lem.assumptionPDM}
	Assume that $0 < \epsilon < \lambda^2_m/9$, and set $\alpha = 3 \sqrt{\epsilon} /\lambda_m$.  Then, for every $k\in[K]$, $\left(1+\alpha\right)\widetilde{\Sigmab}_k\left(\bfB_k\right) - \Sigmab_k\left(\bfB_k\right)$ and $\Sigmab_k\left(\bfB_k\right) - \left(1+\alpha\right)^{-1}\widetilde{\Sigmab}_k\left(\bfB_k\right)$ are both positive definite matrices. Moreover, for all $\xv \in \R^D$,
	\begin{align*}
		\xv^\top \left[\left(1+\alpha\right)\widetilde{\Sigmab}_k\left(\bfB_k\right) - \Sigmab_k\left(\bfB_k\right)\right]\xv \ge \epsilon \left\|\xv\right\|_2^2,\quad
		\xv^\top \left[\Sigmab_k\left(\bfB_k\right) - \left(1+\alpha\right)^{-1}\widetilde{\Sigmab}_k\left(\bfB_k\right)\right]\xv \ge \epsilon \left\|\xv\right\|_2^2.
	\end{align*} 	
\end{lem}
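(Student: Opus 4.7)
The plan is to reduce both matrix inequalities to scalar inequalities on quadratic forms by a judicious decomposition, then bound the minimum eigenvalue of $\widetilde{\Sigmab}_k(\bfB_k)$ via Weyl's inequality, and finally verify algebraically that the specific choice $\alpha = 3\sqrt{\epsilon}/\lambda_m$ paired with $\epsilon < \lambda_m^2/9$ closes the required bounds. The starting point is that the norm bound $\|\Sigmab_k(\bfB_k) - \widetilde{\Sigmab}_k(\bfB_k)\|_2 \le \epsilon$ from \eqref{eq.netsCovarianceG1} dominates the spectral norm, so Weyl's inequality yields $m(\widetilde{\Sigmab}_k(\bfB_k)) \ge m(\Sigmab_k(\bfB_k)) - \epsilon \ge \lambda_m - \epsilon$; the smallness condition $\epsilon < \lambda_m^2/9$ implies $\sqrt{\epsilon} < \lambda_m/3$, hence $\alpha < 1$ and in particular $\lambda_m - \epsilon > 0$.

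For the first inequality, I would decompose
\begin{align*}
	\xv^\top \bigl[(1+\alpha)\widetilde{\Sigmab}_k(\bfB_k) - \Sigmab_k(\bfB_k)\bigr]\xv = \alpha\, \xv^\top \widetilde{\Sigmab}_k(\bfB_k)\xv + \xv^\top \bigl(\widetilde{\Sigmab}_k(\bfB_k) - \Sigmab_k(\bfB_k)\bigr)\xv,
\end{align*}
bound the first term below by $\alpha(\lambda_m - \epsilon)\|\xv\|_2^2$ using the eigenvalue estimate, and the second below by $-\epsilon\|\xv\|_2^2$ using the spectral-norm bound. The desired conclusion reduces to the scalar inequality $\alpha(\lambda_m - \epsilon) \ge 2\epsilon$; substituting $\alpha = 3\sqrt{\epsilon}/\lambda_m$, this becomes $3(\lambda_m - \epsilon) \ge 2\sqrt{\epsilon}\,\lambda_m$, which follows by direct algebra from $\sqrt{\epsilon} < \lambda_m/3$.

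For the second inequality, a parallel decomposition
\begin{align*}
	\xv^\top \bigl[\Sigmab_k(\bfB_k) - (1+\alpha)^{-1}\widetilde{\Sigmab}_k(\bfB_k)\bigr]\xv = \xv^\top \bigl(\Sigmab_k(\bfB_k) - \widetilde{\Sigmab}_k(\bfB_k)\bigr)\xv + \frac{\alpha}{1+\alpha}\, \xv^\top \widetilde{\Sigmab}_k(\bfB_k)\xv
\end{align*}
reduces the problem to $(\alpha/(1+\alpha))(\lambda_m - \epsilon) \ge 2\epsilon$, equivalently $\alpha(\lambda_m - 3\epsilon) \ge 2\epsilon$, which again follows from the same substitution together with $\alpha < 1$ and the smallness condition on $\epsilon$. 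Positive definiteness of the two matrices is then an immediate consequence, since each quadratic form is bounded below by $\epsilon\|\xv\|_2^2 > 0$ for $\xv \ne \zero$. The main obstacle here is purely bookkeeping: the constants $3$ and $9$ are calibrated to make the algebraic inequalities hold with a small margin, so the verification must be done attentively to avoid losing slack when passing from the $\sqrt{\epsilon}$ scale of $\alpha$ to the $\epsilon$ scale of the target lower bounds, and one must also verify at the outset that the norm $\|\cdot\|_2$ used in the covariance discretization step indeed dominates the spectral norm so that Weyl's inequality is applicable.
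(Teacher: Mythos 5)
Your reduction to scalar inequalities is the right general strategy, but the key algebraic step does not close. You claim that $\alpha(\lambda_m-\epsilon)\ge 2\epsilon$, i.e.\ $3(\lambda_m-\epsilon)\ge 2\sqrt{\epsilon}\,\lambda_m$, ``follows by direct algebra from $\sqrt{\epsilon}<\lambda_m/3$.'' It does not: the hypothesis $\epsilon<\lambda_m^2/9$ is not scale-consistent with the conclusion, and for $\lambda_m=10$, $\epsilon=11<100/9$ one has $3(\lambda_m-\epsilon)=-3<0<2\sqrt{\epsilon}\,\lambda_m$; in the same example $\lambda_m-\epsilon<0$, so your preliminary claim that $\epsilon<\lambda_m^2/9$ forces $\lambda_m-\epsilon>0$ is also false (it would require $\lambda_m\le 9$). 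The same failure occurs in your second reduction $\alpha(\lambda_m-3\epsilon)\ge 2\epsilon$. The underlying problem is that the parameterization in the lemma statement ($\alpha=3\sqrt{\epsilon}/\lambda_m$ with $\epsilon<\lambda_m^2/9$, carried over from Maugis--Michel) is not the one the paper actually proves or uses: the paper's own proof takes $\alpha=3\epsilon/\lambda_m$ with $0<\epsilon\le\lambda_m/3$, for which $\alpha\lambda_m-(1+\alpha)\epsilon\ge\tfrac{2}{3}\alpha\lambda_m-\epsilon=\epsilon$ holds exactly, and the downstream choice $\epsilon=\delta\lambda_m/(6\sqrt{6}D)$ is consistent with that version. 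Your argument would need either this corrected parameterization or an additional hypothesis such as $\epsilon<1$ to survive.

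A secondary, non-fatal difference: you lower-bound the quadratic form of $\widetilde{\Sigmab}_k\left(\bfB_k\right)$ by $(\lambda_m-\epsilon)\left\|\xv\right\|_2^2$ via Weyl's inequality, whereas the paper decomposes as $(1+\alpha)\xv^\top\left(\widetilde{\Sigmab}_k\left(\bfB_k\right)-\Sigmab_k\left(\bfB_k\right)\right)\xv+\alpha\,\xv^\top\Sigmab_k\left(\bfB_k\right)\xv$ so that the eigenvalue bound $m\left(\Sigmab_k\left(\bfB_k\right)\right)\ge\lambda_m$, which holds by assumption, is applied directly to $\Sigmab_k\left(\bfB_k\right)$ with no loss. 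With the corrected $\alpha=3\epsilon/\lambda_m$ your decomposition still works for the first matrix but requires $\epsilon\le\lambda_m/9$ for the second, a slightly stronger smallness condition than the paper's $\epsilon\le\lambda_m/3$ (though still met in the eventual application).
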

\begin{proof}[Proof of \cref{lem.assumptionPDM}]
	For all $\xv \neq \zero$, since $\sup_{\lambda \in \text{vp}\left(\Sigmab_k\left(\bfB_k\right)-\widetilde{\Sigmab}_k\left(\bfB_k\right)\right)}\left|\lambda\right|=\left\|\Sigmab_k\left(\bfB_k\right)-\widetilde{\Sigmab}_k\left(\bfB_k\right)\right\|_2 \le \epsilon$, where $\text{vp}$ denotes the spectrum of matrix, $-\epsilon \ge -\lambda_m/3$, and $\alpha = 3 \epsilon /\lambda_m$, it follow that
	\begin{align*}
		\xv^\top \left[\left(1+\alpha\right)\widetilde{\Sigmab}_k\left(\bfB_k\right) - \Sigmab_k\left(\bfB_k\right)\right]\xv &=\left(1+\alpha\right)\xv^\top \left[\widetilde{\Sigmab}_k\left(\bfB_k\right) - \Sigmab_k\left(\bfB_k\right)\right]\xv+\alpha \xv^\top  \Sigmab_k\left(\bfB_k\right)\xv\\
		&\ge -\left(1+\alpha\right)\left\|\widetilde{\Sigmab}_k\left(\bfB_k\right) - \Sigmab_k\left(\bfB_k\right)\right\|_2\left\|\xv\right\|_2^2 + \alpha \lambda_m \left\|\xv\right\|_2^2\\ 
		&\ge \left(\alpha \lambda_m-\left(1+\alpha\right)\epsilon\right)\left\|\xv\right\|_2^2 =  \left(\alpha \lambda_m-\alpha\epsilon-\epsilon\right)\left\|\xv\right\|_2^2\\
		& \ge \left(\frac{2}{3}\alpha \lambda_m-\epsilon\right)\left\|\xv\right\|_2^2= \epsilon\left\|\xv\right\|_2^2>0, \text{ and}\\
		\xv^\top \left[\Sigmab_k\left(\bfB_k\right) - \left(1+\alpha\right)^{-1}\widetilde{\Sigmab}_k\left(\bfB_k\right)\right]\xv &=\left(1+\alpha\right)^{-1}\xv^\top \left[ \Sigmab_k\left(\bfB_k\right)-\widetilde{\Sigmab}_k\left(\bfB_k\right)\right]\xv+\left(1-\left(1+\alpha\right)^{-1}\right) \xv^\top  \Sigmab_k\left(\bfB_k\right)\xv\\
		&\ge \left(\frac{\alpha \lambda_m-\epsilon}{1+\alpha}\right)\left\|\xv\right\|_2^2
		= \frac{2\epsilon}{1+\alpha}\left\|\xv\right\|_2^2
		\ge \epsilon\left\|\xv\right\|_2^2>0 \left(\text{ since }0 < \alpha < 1\right).
	\end{align*}
\end{proof}
By \cref{lem.ratioGaussian} and the same argument as in the proof of Lemma B.9 from \cite{maugis2011non}, given  $0 < \epsilon < \lambda_m/3$, where $\epsilon$ is chosen later, and $\alpha = 3 \epsilon /\lambda_m$, we obtain
\begin{align}
	\max\left\{\frac{l(\xv,\yv)}{f(\xv,\yv)},\frac{f(\xv,\yv)}{u(\xv,\yv)}\right\} \le \left(1+2\alpha\right)^{-\frac{D}{2}} \exp\left(\frac{\left\|\upsilonb_{k,d}(\yv)-\widetilde{\upsilonb}_{k,d}(\yv)\right\|_2^2}{2\epsilon}\right).\label{eq.conditionRadiusDiscVariance}
\end{align}
Because $\ln\left(\cdot\right)$ is a non-decreasing function, $\ln\left(1+2\alpha\right) \ge \alpha, \forall \alpha \in \left[0,1\right]$. Combined with \eqref{eq.netMeanGaussian} where $\delta^2_{\Upsilondk} = D \alpha \epsilon$, we conclude that
\begin{align*}
	\max\left\{\ln \left(\frac{l(\xv,\yv)}{f(\xv,\yv)}\right),\ln\left(\frac{f(\xv,\yv)}{u(\xv,\yv)}\right)\right\} &\le -\frac{D}{2} \ln \left(1+2\alpha\right)+ \frac{\delta^2_{\Upsilondk}}{2\epsilon}\le -\frac{D}{2} \alpha+ \frac{\delta^2_{\Upsilondk}}{2\epsilon}=0.
\end{align*}
This means that $l(\xv,\yv) \le f(\xv,\yv) \le u(\xv,\yv), \forall (\xv,\yv) \in \cX \times \cY$. 
Hence, it remains to bound the size of bracket $\left[l,u\right]$ \wrt~$d_{\gdbk}$.
To this end, we aim to verify that $d^2_{\gdbk}\left(l,u\right) \le \frac{\delta}{2}$.
To do that, we make use of the following \cref{lem.Hellinger2Gaussian}.

\begin{lem}[Proposition C.3 from \cite{maugis2011non}]\label{lem.Hellinger2Gaussian}
	Let $\Phi\left(\cdot;\mub_1,\Sigmab_1\right)$ and $\Phi\left(\cdot;\mub_2,\Sigmab_{2}\right)$ be two Gaussian densities with full rank covariance. It holds that
	\begin{align*}
		&d^2\left(\Phi\left(\cdot;\mub_1,\Sigmab_1\right),\Phi\left(\cdot;\mub_2,\Sigmab_{2}\right)\right) \\
		&= 2\left\{1-2^{D/2}\left|\Sigmab_1\Sigmab_{2}\right|^{-1/4}\left|\Sigmab_1^{-1}+\Sigmab_{2}^{-1}\right|^{-1/2}\exp\left[-\frac{1}{4}\left(\mub_1-\mub_2\right)^\top \left(\Sigmab_1+\Sigmab_{2}\right)^{-1}\left(\mub_1-\mub_2\right)\right]\right\}.
	\end{align*}
\end{lem}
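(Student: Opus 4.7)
).}
The plan is to compute the Hellinger affinity (Bhattacharyya coefficient) between the two Gaussians in closed form and then use the identity $d^2(p,q)=\int(\sqrt{p}-\sqrt{q})^2\,d\xv=2-2\int\sqrt{p\,q}\,d\xv$, valid for any probability densities $p,q$ since $\int p = \int q = 1$. Thus the whole proof reduces to evaluating the integral $I:=\int_{\R^D}\sqrt{\Phi(\xv;\mub_1,\Sigmab_1)\,\Phi(\xv;\mub_2,\Sigmab_2)}\,d\xv$ explicitly.

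First, I would write
\[
\sqrt{\Phi(\xv;\mub_1,\Sigmab_1)\,\Phi(\xv;\mub_2,\Sigmab_2)}=(2\pi)^{-D/2}\bigl|\Sigmab_1\Sigmab_2\bigr|^{-1/4}\exp\!\left[-\tfrac14 Q(\xv)\right],
\]
where $Q(\xv)=(\xv-\mub_1)^\top\Sigmab_1^{-1}(\xv-\mub_1)+(\xv-\mub_2)^\top\Sigmab_2^{-1}(\xv-\mub_2)$. Expanding $Q(\xv)$ and completing the square in $\xv$ with respect to the precision matrix $\Ab:=\Sigmab_1^{-1}+\Sigmab_2^{-1}$ and shift $\bb:=\Sigmab_1^{-1}\mub_1+\Sigmab_2^{-1}\mub_2$ yields
\[
Q(\xv)=(\xv-\Ab^{-1}\bb)^\top\Ab(\xv-\Ab^{-1}\bb)+\mub_1^\top\Sigmab_1^{-1}\mub_1+\mub_2^\top\Sigmab_2^{-1}\mub_2-\bb^\top\Ab^{-1}\bb.
\]
The $\xv$-dependent term is a shifted quadratic form with precision $\Ab/2$; integrating it out produces the factor $(2\pi)^{D/2}|\Ab/2|^{-1/2}=(2\pi)^{D/2}\,2^{D/2}\,|\Sigmab_1^{-1}+\Sigmab_2^{-1}|^{-1/2}$. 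Combining with the prefactor I obtain
\[
I=2^{D/2}\bigl|\Sigmab_1\Sigmab_2\bigr|^{-1/4}\bigl|\Sigmab_1^{-1}+\Sigmab_2^{-1}\bigr|^{-1/2}\exp\!\left[-\tfrac14\bigl(\mub_1^\top\Sigmab_1^{-1}\mub_1+\mub_2^\top\Sigmab_2^{-1}\mub_2-\bb^\top\Ab^{-1}\bb\bigr)\right].
\]

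The remaining, and main, algebraic step is to simplify the constant term in the exponent into the stated Mahalanobis form $(\mub_1-\mub_2)^\top(\Sigmab_1+\Sigmab_2)^{-1}(\mub_1-\mub_2)$. For this I would use the matrix identity
\[
\Sigmab_1^{-1}\bigl(\Sigmab_1^{-1}+\Sigmab_2^{-1}\bigr)^{-1}\Sigmab_2^{-1}=(\Sigmab_1+\Sigmab_2)^{-1},
\]
which follows at once from $(\Sigmab_1^{-1}+\Sigmab_2^{-1})=\Sigmab_2^{-1}(\Sigmab_1+\Sigmab_2)\Sigmab_1^{-1}$. Applying this identity to each of the three cross-products after expanding $\bb^\top\Ab^{-1}\bb=(\Sigmab_1^{-1}\mub_1+\Sigmab_2^{-1}\mub_2)^\top(\Sigmab_1^{-1}+\Sigmab_2^{-1})^{-1}(\Sigmab_1^{-1}\mub_1+\Sigmab_2^{-1}\mub_2)$ turns the constant term into exactly $(\mub_1-\mub_2)^\top(\Sigmab_1+\Sigmab_2)^{-1}(\mub_1-\mub_2)$. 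This bookkeeping is the only non-mechanical part of the argument and is where I expect the main (mild) obstacle to lie, since one has to handle the asymmetric products $\Sigmab_1^{-1}(\Sigmab_1^{-1}+\Sigmab_2^{-1})^{-1}\Sigmab_1^{-1}$ and $\Sigmab_1^{-1}(\Sigmab_1^{-1}+\Sigmab_2^{-1})^{-1}\Sigmab_2^{-1}$ without inverting too many matrices by hand.

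Finally, inserting the simplified expression for $I$ into $d^2=2-2I$ gives the claimed formula. Throughout, the full-rank assumption on $\Sigmab_1$ and $\Sigmab_2$ guarantees that $\Sigmab_1^{-1}+\Sigmab_2^{-1}$ and $\Sigmab_1+\Sigmab_2$ are positive definite, which legitimizes both the Gaussian integration and the use of the matrix identity above.
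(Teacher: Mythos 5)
Your computation is correct: the identity $d^2(p,q)=2-2\int\sqrt{pq}\,d\xv$, the Gaussian integral after completing the square with precision $\Ab=\Sigmab_1^{-1}+\Sigmab_2^{-1}$, and the reduction of the constant term via $\Sigmab_1^{-1}(\Sigmab_1^{-1}+\Sigmab_2^{-1})^{-1}\Sigmab_2^{-1}=(\Sigmab_1+\Sigmab_2)^{-1}$ (together with its diagonal analogues, e.g. $\Sigmab_1^{-1}-\Sigmab_1^{-1}\Ab^{-1}\Sigmab_1^{-1}=(\Sigmab_1+\Sigmab_2)^{-1}$) all check out and yield exactly the stated formula, with the factor $2^{D/2}$ arising from $|\Ab/2|^{-1/2}=2^{D/2}|\Ab|^{-1/2}$. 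Note that the paper itself gives no proof of this lemma --- it is imported verbatim as Proposition C.3 of the cited reference --- so there is nothing internal to compare against; your derivation is the standard one and is sound.
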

Therefore, using the fact that $\cosh(t) = \frac{e^{-t}+e^{t}}{2}$, \cref{lem.Hellinger2Gaussian} leads to, for all $\yv \in \cY$:
\begin{align*}
	\hel(l(\cdot,\yv),u(\cdot,\yv)) 
	&
	= \int_{\cX}\left[ l(\xv,\yv) + u(\xv,\yv) - 2\sqrt{l(\xv,\yv)u(\xv,\yv)} \right]d\xv \\
	%
	%
	&= \left(1+2\alpha\right)^{-D}+ \left(1+2\alpha\right)^{D}-2\\
	&+\hel\left(\Phi\left(\cdot;\widetilde{\upsilonb}_{k,d}(\yv),\left(1+\alpha\right)^{-1}\widetilde{\Sigmab}_k\left(\bfB_k\right)\right),\Phi\left(\cdot;\widetilde{\upsilonb}_{k,d}(\yv),\left(1+\alpha\right)\widetilde{\Sigmab}_k\left(\bfB_k\right)\right)\right)\\
	&= 2\cosh\left[D\ln\left(1+2\alpha\right)\right]-2\\
	&\quad+2\left[1-2^{D/2}\left[\left(1+\alpha\right)^{-1}+\left(1+\alpha\right)\right]^{-D/2}\left|\widetilde{\Sigmab}_k\left(\bfB_k\right)\right|^{-1/2}\left|\widetilde{\Sigmab}_k\left(\bfB_k\right)\right|^{1/2}\right] 
	\\
	&= 2\cosh\left[D\ln\left(1+2\alpha\right)\right]-2+2-2\left[\cosh\left(\ln\left(1+\alpha\right)\right)\right]^{-D/2}\\
	& = 2g\left(D\ln\left(1+2\alpha\right)\right) + 2 h\left(\ln\left(1+\alpha\right)\right),
\end{align*}
where $g(t)=\cosh(t)-1 = \frac{e^{-t}+e^{t}}{2}  - 1$, and $h(t) = 1- \cosh(t)^{-D/2}$.
The upper bounds of terms $g$ and $h$ separately imply that, for all $\yv \in \cY$,
\begin{align*}
	\hel(l(\cdot,\yv),u(\cdot,\yv)) \le 2\left(2\cosh\left(\frac{1}{\sqrt{6}}\right)\alpha^2D^2 + \frac{1}{4}\alpha^2D^2\right) \le 6 \alpha^2 D^2 = \frac{\delta^2}{4},
\end{align*} 
where we choose $\alpha = \frac{3 \epsilon}{\lambda_m}, \epsilon = \frac{\delta \lambda_m}{6\sqrt{6}D}$, $\forall \delta \in (0,1],D \in \Ns, \lambda_m > 0$, which appears in \eqref{eq.conditionRadiusDiscVariance} and satisfies $\alpha=  \frac{\delta}{2\sqrt{6}D}$ and $0 < \epsilon < \frac{\lambda_m}{3}$. Indeed, studying functions $g$ and $h$ yields
\begin{align*}
	\bfg'(t) &= \sinh(t), \bfg''(t) = \cosh(t)\le  \cosh(c), \forall t \in [0,c], c \in\R_+,
	\\
	h'(t) &=\frac{D}{2} \cosh(t)^{-D/2-1} \sinh(t),\\ 
	h''(t) &=\frac{D}{2} \left(-\frac{D}{2}-1\right) \cosh(t)^{-D/2-2} \sinh^2(t) +\frac{D}{2} \cosh(t)^{-D/2}\\
	&=\frac{D}{2}\left(1-\left(\frac{D}{2}+1\right)\left(\frac{\sinh(t)}{\cosh(t)}\right)^2\right)  \cosh(t)^{-D/2} \le \frac{D}{2},
\end{align*}
where we used the fact that $\cosh(t) \ge 1$. Then, since $g(0) = 0,\bfg'(0) =0,h(0) = 0,h'(0) = 0$, by applying Taylor's Theorem, it is true that 
\begin{align*}
	g(t) &= g(t) - g(0) - \bfg'(0)t =  R_{0,1}(t) \le \cosh(c)\frac{t^2}{2},\forall t \in [0,c],\\
	h(t) &= h(t) - h(0) - h'(0)t =  R_{0,1}(t) \le \frac{D}{2}\frac{t^2}{2} \le \frac{D^2}{2}\frac{t^2}{2},\forall t \ge 0.
\end{align*}
We wish to find an upper bound for $t = D \ln\left(1+2 \alpha\right)$, $D \in \Ns$, $\alpha = \frac{\delta}{2\sqrt{6}D}$, $\delta \in (0,1]$. Since $\ln$ is an increasing function, then we have 
$$t = D \ln\left(1+\frac{\delta}{\sqrt{6}D}\right) \le D \ln\left(1+\frac{1}{\sqrt{6}D}\right) \le D \frac{1}{\sqrt{6}D}= \frac{1}{\sqrt{6}}, \forall\delta \in (0,1],$$ since $\ln\left(1+\frac{1}{\sqrt{6}D}\right) \le \frac{1}{\sqrt{6}D}$, $\forall D \in \Ns$. Then, since $\ln\left(1+ 2\alpha\right) \le 2\alpha, \forall \alpha \ge 0$, 
\begin{align*}
	g\left(D \ln\left(1+2 \alpha\right)\right) &\le \cosh\left(\frac{1}{\sqrt{6}}\right)\frac{\left(D \ln\left(1+2 \alpha\right)\right)^2}{2} \le \cosh\left(\frac{1}{\sqrt{6}}\right) \frac{D^2}{2} 4 \alpha^2,\\
	h\left(\ln\left(1+\alpha\right)\right) &\le \frac{D^2}{2}\frac{\left(\ln\left(1+\alpha\right)\right)^2}{2} \le \frac{D^2\alpha^2}{4}.
\end{align*}


%
Note that the set of $\delta/2$-brackets $[l,u]$ over $\gdbk$ is totally defined by the parameter spaces ${\tilde{S}}^{\disc}_{\bfB_k}(\epsilon)$ and $G_{\Upsilondk}\left(\delta_{\Upsilondk}\right)$. This leads to an upper bound of the $\delta/2$-bracketing entropy of $\gdbk$ evaluated from an upper bound of the two set cardinalities. Hence, given any $\delta > 0$, by choosing $\epsilon = \frac{\delta \lambda_m}{6\sqrt{6}D}$, $\alpha = \frac{3 \epsilon}{\lambda_m}=\frac{\delta}{2\sqrt{6}D}$, and $\delta^2_{\Upsilondk} = D \alpha \epsilon=D\frac{\delta}{2\sqrt{6}D}\frac{\delta \lambda_m}{6\sqrt{6}D} = \frac{\delta^2\lambda_m}{72D}$, it holds that
\begin{align*}
	\cN_{[\cdot],d_{\gdbk} }\left(\frac{\delta}{2},\gdbk\right)
	%
	&\le \card\left({\tilde{S}}^{\disc}_{\bfB_k}(\epsilon)\right)\times \card\left(G_{\Upsilondk}\left(\delta_{\Upsilondk}\right)\right) \\
	&\le \left(\Bigg\lfloor \frac{2\lambda_M}{\epsilon}\Bigg\rfloor\frac{D\left(D-1\right)}{2 D_{\bfB_k}}\right)^{D_{\bfB_k}} \left(\frac{\exp\left(C_{\Upsilondk}\right)}{\delta_{\Upsilondk}}\right)^{\dim\left(\Upsilondk\right)} \left(\text{using \eqref{eq.upperCardCovariance} and \eqref{eq.cardMeanGaussianEx}}\right)\\
	&\le \left(\frac{2\lambda_M 6 \sqrt{6}D}{\delta \lambda_m}\frac{D\left(D-1\right)}{2 D_{\bfB_k}}\right)^{D_{\bfB_k}} \left(\frac{6 \sqrt{2D}\exp\left(C_{\Upsilondk}\right)}{\delta \sqrt{\lambda_m}}\right)^{\dim\left(\Upsilondk\right)}\\
	&= \left(\frac{6 \sqrt{6} \lambda_M D^2\left(D-1\right)}{ \lambda_m D_{\bfB_k}}\right)^{D_{\bfB_k}} \left(\frac{6 \sqrt{2D}\exp\left(C_{\Upsilondk}\right)}{ \sqrt{\lambda_m}}\right)^{\dim\left(\Upsilondk\right)}\left(\frac{1}{\delta}\right)^{D_{\bfB_k}+\dim\left(\Upsilondk\right)}.
\end{align*}
Finally, by definition of bracketing entropy in \eqref{eq_definition_BracketingEntropy}, we obtain
\begin{align*}
	\cH_{[\cdot],d_{\gdbk} }\left(\frac{\delta}{2},\gdbk\right)
	%
	&\le D_{\bfB_k} \ln \left(\frac{6 \sqrt{6} \lambda_M D^2\left(D-1\right)}{ \lambda_m D_{\bfB_k}}\right) + \dim\left(\Upsilondk\right) \ln\left(\frac{6 \sqrt{2D}\exp\left(C_{\Upsilondk}\right)}{ \sqrt{\lambda_m}}\right)\nn\\
	&\quad +\left(D_{\bfB_k}+\dim\left(\Upsilondk\right)\right)\ln\left(\frac{1}{\delta}\right)
	= \dim\left(\gdbk\right) \left(C_{\gdbk} + \ln \left(\frac{1}{\delta}\right)\right),
\end{align*}
where $\dim\left(\gdbk\right) = D_{\bfB_k}+\dim\left(\Upsilondk\right)$ and $$C_{\gdbk} = \frac{D_{\bfB_k} \ln \left(\frac{6 \sqrt{6} \lambda_M D^2\left(D-1\right)}{ \lambda_m D_{\bfB_k}}\right) + \dim\left(\Upsilondk\right) \ln\left(\frac{6 \sqrt{2D}\exp\left(C_{\Upsilondk}\right)}{ \sqrt{\lambda_m}}\right)}{\dim\left(\gdbk\right)}.$$


\bibliographystyle{apalike2}
\bibliography{NAMS-BLoMPE.bib}

\end{document}